\newtheorem{theorem}{Theorem}[section]
\newtheorem{lemma}[theorem]{Lemma}
\newcommand{\C}{\mathbb{C}}
\newcommand{\N}{\mathbb{N}}
\newcommand{\R}{\mathbb{R}}
\newcommand{\T}{\mathbb{T}}
\newcommand{\Z}{\mathbb{Z}}
\newcommand{\cA}{\mathcal{A}}
\newcommand{\cB}{\mathcal{B}}
\newcommand{\cP}{\mathcal{P}}
\newcommand{\im}{\mathrm{Im}\,}
\newcommand{\tr}{\mathrm{tr}}
\newcommand{\eps}{\varepsilon}
\newcommand{\cC}{\mathcal{C}}
\newcommand{\cH}{\mathcal{H}}
\begin{document}
\title[Asymptotic Formulas for Toeplitz Matrices]
{Higher Order Asymptotic Formulas\\
for Toeplitz Matrices with Symbols\\
in Generalized H\"older Spaces}

\author{Alexei~Yu.~Karlovich}
\thanks{The author is supported by F.C.T. (Portugal) grants
SFRH/BPD/11619/2002 and FCT/ FEDER/POCTI/MAT/59972/2004}

\address{%
Departamento de Matem\'atica,
Instituto Superior T\'ecnico,
Av. Rovisco Pais 1,
1049--001, Lisbon,
Portugal}
\email{akarlov@math.ist.utl.pt}
\begin{abstract}
We prove higher order asymptotic formulas for determinants and traces of finite
block Toeplitz matrices generated by matrix functions belonging to generalized
H\"older spaces with characteristic functions from the Bari-Stechkin class.
We follow the approach of B\"ottcher and Silbermann and generalize their results
for symbols in standard H\"older spaces.
\end{abstract}
\subjclass[2000]{Primary 47B35; Secondary 15A15, 47B10, 47L20, 47A68}
\keywords{Block Toeplitz matrix, determinant, trace,
Szeg\H{o}-Widom limit theorems, decomposing algebra,
canonical Wiener-Hopf factorization, generalized H\"older space, Bari-Stechkin class}
\maketitle
\section{Introduction}
\subsection{Finite block Toeplitz matrices}
Let $\Z,\N,\Z_+$, and $\C$ be the sets of integers, positive integers,
nonnegative integers, and all complex numbers, respectively.
Suppose $N\in\N$. For a Banach space $X$,
let $X_N$ and $X_{N\times N}$
be the spaces of vectors and matrices with entries in $X$.
Let $\T$ be the unit circle. For $1\le p\le\infty$, let
$L^p:=L^p(\T)$ and $H^p:=H^p(\T)$ be the standard Lebesgue and Hardy
spaces of the unit circle. For $a\in L_{N\times N}^1$ one can define
\[
a_k=\frac{1}{2\pi}\int_0^{2\pi}a(e^{i\theta})e^{-ik\theta}d\theta
\quad (k\in\Z),
\]
the sequence of the Fourier coefficients of $a$.
Let $I$ be the identity operator, $P$ be the Riesz projection of
$L^2$ onto $H^2$, $Q:=I-P$, and define $I,P$, and $Q$ on
$L_N^2$ elementwise. For $a\in L_{N\times N}^\infty$ and $t\in\T$,
put $\widetilde{a}(t):=a(1/t)$ and $(Ja)(t):=t^{-1}\widetilde{a}(t)$.
Define \textit{Toeplitz operators}
\[
T(a):=PaP|\im P,
\quad
T(\widetilde{a}):=JQaQJ|\im P
\]
and \textit{Hankel operators}
\[
H(a):=PaQJ|\im P,
\quad
H(\widetilde{a}):=JQaP|\im P.
\]
The function $a$ is called the \textit{symbol} of $T(a)$, $T(\widetilde{a})$,
$H(a)$, $H(\widetilde{a})$. We are interested in the asymptotic behavior of
\textit{finite block Toeplitz matrices}
\[
T_n(a):=(a_{j-k})_{j,k=0}^n
\]
generated by (the Fourier coefficients of) the symbol $a$ as $n\to\infty$.
Many results about asymptotic properties of $T_n(a)$ as $n\to\infty$ are contained
in the books by Grenander and Szeg\H{o} \cite{GS58}, B\"ottcher and Silbermann
\cite{BS83,BS99,BS06}, Hagen, Roch, and Silbermann \cite{HRS01}, Simon \cite{Simon05},
and B\"ottcher and Grudsky \cite{BG05}.
\subsection{Szeg\H{o}-Widom limit theorems}
Let us formulate precisely the most relevant results.
Let $(K_{2,2}^{1/2,1/2})_{N\times N}$ be the Krein algebra \cite{Krein66}
of matrix functions $a$ in $L_{N\times N}^\infty$
satisfying $\sum_{k=-\infty}^\infty \|a_k\|^2|k|<\infty$,
where $\|\cdot\|$ is any matrix norm on $\C_{N\times N}$.
The following beautiful theorem about the asymptotics of finite
block Toeplitz matrices was proved by Widom \cite{Widom76}.
\begin{theorem}\label{th:Widom1}
{\rm (see \cite[Theorem~6.1]{Widom76})}.
If $a\in (K_{2,2}^{1/2,1/2})_{N\times N}$ and the Toeplitz operators
$T(a)$ and $T(\widetilde{a})$ are invertible on $H_N^2$, then
$T(a)T(a^{-1})-I$ is of trace class and, with appropriate branches
of the logarithm,
\begin{equation}\label{eq:Widom1}
\log\det T_n(a)=(n+1)\log G(a)+\log \mathrm{det}_1 T(a)T(a^{-1})+o(1)
\quad\mbox{as}\quad n\to\infty,
\end{equation}
where $\det_1$ is defined in Section~{\rm\ref{sec:Schatten-Neumann}} and
\[
G(a):=\lim_{r\to 1-0}\exp\left(\frac{1}{2\pi}\int_0^{2\pi}
\log\det\widehat{a}_r(e^{i\theta})d\theta\right),
\quad
\widehat{a}_r(e^{i\theta}):=\sum_{n=-\infty}^{\infty}a_nr^{|n|}e^{in\theta}.
\]
\end{theorem}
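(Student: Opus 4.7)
The plan is to follow the classical Szegő–Widom strategy: use a canonical Wiener–Hopf factorization $a = a_- a_+$ to extract the leading $(n+1)\log G(a)$ term from block-triangular factors, and then identify the constant via a trace-class operator identity on $H_N^2$.

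\textbf{Factorization.} Since $(K_{2,2}^{1/2,1/2})_{N\times N}$ is a decomposing Banach algebra with respect to the Riesz projection $P$, the simultaneous invertibility of $T(a)$ and $T(\widetilde{a})$ is equivalent to $a$ admitting a canonical right factorization $a = a_- a_+$ with $a_{\pm}^{\pm 1}$ again in the Krein algebra. The matrices $T_n(a_-)$ and $T_n(a_+)$ are then block triangular with diagonal blocks $(a_{\pm})_0$, giving
\[
\det T_n(a_\pm)=(\det(a_\pm)_0)^{n+1},
\qquad
G(a)=\det(a_-)_0\cdot\det(a_+)_0,
\]
which accounts for the $(n+1)\log G(a)$ leading term.

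\textbf{Trace-class structure.} The identity $T(fg)=T(f)T(g)+H(f)H(\widetilde{g})$, applied with $f=a$ and $g=a^{-1}$, yields
\[
T(a)T(a^{-1})=I-H(a)H(\widetilde{a^{-1}}).
\]
Under the Krein condition $\sum_k\|a_k\|^2|k|<\infty$, each of the Hankel operators $H(a)$ and $H(\widetilde{a^{-1}})$ is Hilbert–Schmidt, so their product lies in the trace class. Hence $T(a)T(a^{-1})-I$ is trace class and $\det_1 T(a)T(a^{-1})$ is well defined.

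\textbf{Finite-section asymptotics.} Writing $T_n(a)=T_n(a_-)T_n(a_+)+C_n$, where $C_n$ captures the Fourier modes past the $n$th and can be expressed through the flip operator $W_n\colon(x_0,\dots,x_n)\mapsto(x_n,\dots,x_0)$ together with $H(\widetilde{a_-})$ and $H(a_+)$, one factors
\[
T_n(a)=T_n(a_-)(I+E_n)T_n(a_+),\qquad E_n=T_n(a_-)^{-1}C_nT_n(a_+)^{-1},
\]
and obtains $\log\det T_n(a)=(n+1)\log G(a)+\log\det(I+E_n)$. The remaining task is to show that $\|E_n-E_\infty\|_1\to 0$ in trace norm for an operator $E_\infty$ satisfying $\det(I+E_\infty)=\det_1 T(a)T(a^{-1})$.

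The principal obstacle is exactly this trace-norm convergence. It combines strong-convergence statements of the form $W_n H(b) W_n \to 0$ for any symbol $b$ with Hilbert–Schmidt Hankel operator, the uniform Hilbert–Schmidt bounds supplied by the Krein-algebra summability condition, and a direct identification of $\lim_n E_n$ with the trace-class perturbation $-H(a)H(\widetilde{a^{-1}})$ from the operator identity of the previous step; this matching between the "corner" of the finite Toeplitz matrix and the half-line Hankel operator is the delicate part of the argument.
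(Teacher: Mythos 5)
There is an important framing point first: the paper does not prove Theorem~\ref{th:Widom1} at all — it is quoted from Widom \cite{Widom76} with pointers to \cite{BS83,BS06} — so your proposal has to stand on its own, and as written it does not. The first genuine gap is the factorization step. $(K_{2,2}^{1/2,1/2})_{N\times N}$ is \emph{not} a decomposing algebra in the sense of Section~\ref{sec:factorization}: the Riesz projection does not leave it invariant, since $P$ is unbounded on $L^\infty$ and the Krein algebra contains discontinuous functions (this is precisely the difficulty the paper alludes to after Theorem~\ref{th:HO-abstract}, where for generalized Krein algebras one must allow \emph{both} factors $u_\pm$ to be discontinuous). Consequently the asserted equivalence ``$T(a)$ and $T(\widetilde{a})$ invertible $\Leftrightarrow$ canonical factorization $a=a_-a_+$ with $a_\pm^{\pm 1}$ again in the Krein algebra'' is not something you may invoke as standard: invertibility of $T(a)$ yields a canonical generalized factorization in $L_N^2$, but keeping the factors and their inverses in the Krein algebra (or even in $L^\infty$) is a nontrivial claim that your argument needs and does not supply; Theorem~\ref{th:factorization} does not apply because the Krein algebra is not continuously embedded in $C$. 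You also tacitly use that $a^{-1}$ again satisfies the Krein summability condition (to make $H(\widetilde{a^{-1}})$ Hilbert--Schmidt); that requires inverse closedness of the Krein algebra and should at least be justified or cited.

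The second, and decisive, gap is the one you name yourself: the trace-norm convergence of $E_n$ and the identification of the limiting determinant with $\mathrm{det}_1\,T(a)T(a^{-1})$ \emph{are} the theorem, and the proposal stops exactly there. Moreover, the identification you sketch points at the wrong operator. From Widom's identity one has $T_n(a)=T_n(a_-)T_n(a_+)+W_nH(\widetilde{a_-})H(a_+)W_n$, hence $E_n=W_nT_n(\widetilde{a_-^{-1}})H(\widetilde{a_-})H(a_+)T_n(\widetilde{a_+^{-1}})W_n$; since $W_n\to 0$ weakly, $E_n$ itself does not tend to $-H(a)H(\widetilde{a^{-1}})$ in any usable sense — what one can hope to control in trace norm is $W_nE_nW_n$, whose limit is built from $H(\widetilde{a_-})H(a_+)$ and Toeplitz operators in the flipped symbols, i.e.\ it is naturally tied to $T(\widetilde{a})$, not to $T(a)T(a^{-1})$. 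Converting that limit into $\mathrm{det}_1\,T(a)T(a^{-1})$ requires an additional determinant identity (compare the two equivalent forms $\mathrm{det}_1T(a)T(a^{-1})=1/\mathrm{det}_1T(\widetilde{c})T(\widetilde{b})$ in Theorem~\ref{th:HO1}(b)), which is absent. So the proposal is a reasonable outline of the classical Szeg\H{o}--Widom route, but the factorization input is unjustified (and the ``decomposing'' claim false), the analytic core (uniform Hilbert--Schmidt bounds, trace-norm convergence) is not carried out, and the sketched identification of the constant is incorrect as stated.
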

The proof of the above result in a more general form is contained in
\cite[Theorem~6.11]{BS83} and \cite[Theorem~10.30]{BS06}.

Let $\lambda_1^{(n)},\dots,\lambda_{(n+1)N}^{(n)}$ denote the eigenvalues of
$T_n(a)$ repeated according to their algebraic multiplicity. Let $\mathrm{sp}\,A$
denote the spectrum of a bounded linear operator $A$ and $\tr M$ denote the trace
of a matrix $M$. Theorem~\ref{th:Widom1} is equivalent to the assertion
\[
\sum_i\log\lambda_i^{(n)}=\tr\log T_n(a)=(n+1)\log G(a)+
\log\mathrm{det}_1 T(a)T(a^{-1})+o(1).
\]
Widom \cite{Widom76} noticed that Theorem~\ref{th:Widom1} yields even a description of
the asymptotic behavior of $\tr f(T_n(a))$ if one replaces $f(\lambda)=\log\lambda$
by an arbitrary function $f$ analytic in an open neighborhood of the union
$\mathrm{sp}\,T(a)\cup\mathrm{sp}\,T(\widetilde{a})$ (we henceforth call such
$f$ simply analytic on $\mathrm{sp}\,T(a)\cup\mathrm{sp}\,T(\widetilde{a})$).
\begin{theorem}\label{th:Widom2}
{\rm (see \cite[Theorem~6.2]{Widom76}).}
If $a\in (K_{2,2}^{1/2,1/2})_{N\times N}$ and if $f$ is analytic on
$\mathrm{sp}\,T(a)\cup\mathrm{sp}\,T(\widetilde{a})$, then
\begin{equation}\label{eq:Widom2}
\tr f(T_n(a))=(n+1)G_f(a)+E_f(a)+o(1)
\quad\mbox{as}\quad n\to\infty,
\end{equation}
where
\begin{eqnarray*}
G_f(a)
&:=&
\frac{1}{2\pi}\int_0^{2\pi}(\tr f(a))(e^{i\theta})d\theta,
\\
E_f(a)
&:=&
\frac{1}{2\pi i}\int_{\partial\Omega}f(\lambda)
\frac{d}{d\lambda}\log\mathrm{det}_1 T[a-\lambda]T[(a-\lambda)^{-1}]d\lambda,
\end{eqnarray*}
$\det_1$ is defined in Section~{\rm\ref{sec:Schatten-Neumann}},
and $\Omega$ is any bounded open set containing the set
$\mathrm{sp}\,T(a)\cup\mathrm{sp}\,T(\widetilde{a})$
on the closure of which $f$ is analytic.
\end{theorem}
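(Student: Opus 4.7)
The plan is to combine Theorem~\ref{th:Widom1}, applied to the one-parameter family of shifted symbols $a-\lambda$, with the holomorphic functional calculus. Since the eigenvalues of $T_n(a)$ are known to cluster into $\mathrm{sp}\,T(a)\cup\mathrm{sp}\,T(\widetilde a)$, we have $\mathrm{sp}\,T_n(a)\subset\Omega$ for all $n$ sufficiently large, so the Riesz-Dunford calculus together with Jacobi's formula $\tr(\lambda I-T_n(a))^{-1}=\frac{d}{d\lambda}\log\det(\lambda I-T_n(a))$ gives
\[
\tr f(T_n(a))=\frac{1}{2\pi i}\int_{\partial\Omega}f(\lambda)\,\frac{d}{d\lambda}\log\det T_n(a-\lambda)\,d\lambda,
\]
where I used that $\log\det T_n(a-\lambda)$ and $\log\det(\lambda I-T_n(a))$ differ by a constant in $\lambda$.

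For each $\lambda\in\partial\Omega$, the operators $T(a-\lambda)=T(a)-\lambda I$ and $T(\widetilde{a-\lambda})=T(\widetilde a)-\lambda I$ are invertible, so Theorem~\ref{th:Widom1} applied to $a-\lambda$ yields
\[
\log\det T_n(a-\lambda)=(n+1)\log G(a-\lambda)+\log\mathrm{det}_1 T(a-\lambda)T((a-\lambda)^{-1})+\eps_n(\lambda),\qquad \eps_n(\lambda)\to 0.
\]
The first substantial task is to upgrade this to \emph{uniform} convergence of $\eps_n$ on a neighborhood of $\partial\Omega$; Cauchy's estimates then give uniform convergence of $\eps_n'$ on $\partial\Omega$ itself. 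Uniformity is plausible because $\lambda\mapsto a-\lambda$ is holomorphic with values in $(K_{2,2}^{1/2,1/2})_{N\times N}$ and $\partial\Omega$ avoids $\mathrm{sp}\,T(a)\cup\mathrm{sp}\,T(\widetilde a)$, so the canonical Wiener-Hopf factorization of $a-\lambda$ and the trace-norm bound on $T(a-\lambda)T((a-\lambda)^{-1})-I$ that underlie Theorem~\ref{th:Widom1} depend continuously on the compact parameter $\lambda$; making this rigorous requires revisiting the proof of that theorem step by step.

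Once uniform convergence of $\eps_n'$ is secured, substitute the $\lambda$-differentiated asymptotic into the contour integral. The remainder $\frac{1}{2\pi i}\int_{\partial\Omega}f(\lambda)\eps_n'(\lambda)\,d\lambda$ tends to $0$. The middle contribution is exactly $E_f(a)$ by the definition in the statement. For the Szegő contribution, use the representation
\[
\log G(a-\lambda)=\frac{1}{2\pi}\int_0^{2\pi}\log\det(a(e^{i\theta})-\lambda)\,d\theta
\]
(valid under the Krein-algebra hypothesis on $a$), differentiate in $\lambda$, interchange integrals by Fubini, and invoke the finite-dimensional identity
\[
\frac{1}{2\pi i}\int_{\partial\Omega}f(\lambda)\,\tr(\lambda I-M)^{-1}\,d\lambda=\tr f(M)\qquad(\mathrm{sp}\,M\subset\Omega),
\]
applied to $M=a(e^{i\theta})$ (whose eigenvalues lie in $\mathrm{sp}\,T(a)\subset\Omega$), to obtain $(n+1)G_f(a)$.

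I expect the main obstacle to be establishing the uniformity of the $o(1)$ error in Theorem~\ref{th:Widom1} as $\lambda$ ranges over $\partial\Omega$; everything else reduces to standard manipulations combining the Riesz-Dunford calculus, Fubini, Cauchy's estimates, and the matrix Cauchy formula.
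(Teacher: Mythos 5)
Your outline follows the same contour-integral scheme that Widom used and that this paper reuses for its refinement (the proof of Theorem~\ref{th:HO2}(c); the paper itself does not reprove Theorem~\ref{th:Widom2} but cites Widom): represent $\tr f(T_n(a))$ as $\frac{1}{2\pi i}\int_{\partial\Omega}f(\lambda)\frac{d}{d\lambda}\log\det T_n(a-\lambda)\,d\lambda$, apply Theorem~\ref{th:Widom1} to $a-\lambda$, and identify the three contributions, the Szeg\H{o} term being handled exactly as in \eqref{eq:HO2-C-9} via the matrix Cauchy formula. The genuine gap is precisely the point you flag and then defer: you need the $o(1)$ in Theorem~\ref{th:Widom1} to be \emph{uniform} for $\lambda$ in a neighborhood of $\partial\Omega$, and you only assert that this is ``plausible'' because the factorization and trace-norm data depend continuously on $\lambda$. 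Pointwise $o(1)$ for each fixed symbol together with continuity of $\lambda\mapsto a-\lambda$ does not upgrade to uniform convergence by any soft compactness argument (there is no monotonicity available, so no Dini-type reasoning applies), so as written the proof is incomplete at its analytic core. What actually makes this work, both in Widom's argument and in the paper's proof of the refined statement, is a quantitative remainder: one chooses canonical Wiener-Hopf factors $u_\pm(\lambda),v_\pm(\lambda)$ depending continuously on $\lambda\in\Sigma$ via the stability theorem (Theorem~\ref{th:stability}) and then proves an explicit majorant for the error, uniform over $\Sigma$ (in the paper, Lemma~\ref{le:decomposition} combined with Lemma~\ref{le:BS}); uniformity is built into the bound rather than deduced after the fact.

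A secondary structural difference: the paper never differentiates the remainder in $\lambda$ at all. It multiplies the asymptotic identity by $-f'(\lambda)$ and integrates by parts over $\partial\Omega$, so only a uniform bound on the remainder itself is needed (this is how \eqref{eq:HO2-C-10} is obtained). Your alternative -- uniform convergence of $\eps_n$ on a two-dimensional neighborhood plus Cauchy estimates to control $\eps_n'$ on $\partial\Omega$ -- can be made to work, but it demands strictly more: holomorphy of $\eps_n(\lambda)$ on that neighborhood (which requires a consistent choice of branches of the logarithms, or a reformulation in terms of logarithmic derivatives) and uniform convergence off the contour, not just on it. The remaining ingredients of your sketch (the Riesz-Dunford representation, the inclusion $\mathrm{sp}\,T_n(a)\subset\Omega$ for large $n$ from the clustering of the eigenvalues, and the identification of $(n+1)G_f(a)$) are sound and correspond to the identities \eqref{eq:HO2-C-8} and \eqref{eq:HO2-C-9} that the paper quotes from Widom.
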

The proof of Theorem~\ref{th:Widom2} for continuous symbols $a$ is also given
in \cite[Section 10.90]{BS06} and in \cite[Theorem~5.6]{BS99}.
In the scalar case ($N=1$) Theorems~\ref{th:Widom1} and \ref{th:Widom2}
go back to Gabor Szeg\H{o} (see \cite{GS58} and historical remarks
in \cite{BS83,BS99,BS06,HRS01,Simon05}).
\subsection{Smoothness effects}
Fisher and Hartwig \cite{FH69} were probably the first to draw due attention to
higher order correction terms in asymptotic formulas for Toeplitz determinants.
B\"ottcher and Silbermann \cite{BS80} obtained analogs of Theorem~\ref{th:Widom1}
for symbols belonging to H\"older-Zygmund spaces $C_{N\times N}^\gamma$,
$0<\gamma<\infty$. If $\gamma>1/2$, then $C_{N\times N}^\gamma$ is properly
contained in $(K_{2,2}^{1/2,1/2})_{N\times N}$,
and for $a\in C_{N\times N}^\gamma$, formula \eqref{eq:Widom1} is then valid with
$o(1)$ replaced by $O(n^{1-2\gamma})$. Nowadays this result can be proved almost
immediately by using the so-called Geronimo-Case-Borodin-Okounkov formula (see
\cite{BW06}). The author \cite{K-IWOTA05} proved that if $\gamma>1/2$ and
$a\in C_{N\times N}^\gamma$, then \eqref{eq:Widom2} holds with $o(1)$ replaced
by $O(n^{1-2\gamma})$. That is, for very smooth symbols the remainders in
\eqref{eq:Widom1} and \eqref{eq:Widom2} go to zero with high speed (depending
on the smoothness).

On the other hand, B\"ottcher and Silbermann \cite{BS80} (see also
\cite[Sections~6.15--6.20]{BS83} and \cite[Sections~10.34--10.38]{BS06})
observed that if $0<\gamma\le 1/2$, then \eqref{eq:Widom1} requires a correction
involving additional terms and regularized operator determinants.
This is the effect of ``insufficient smoothness".
They also studied the same problems for Wiener algebras with power weights
\cite{BS80}, \cite[Sections~6.15--6.20]{BS83}, \cite[Sections~10.34--10.38]{BS06}.
Recently the author \cite{K-JMAA} extended their higher order versions of
Theorem~\ref{th:Widom1} to Wiener algebras with general weights satisfying
natural submultiplicativity, monotonicity, and regularity conditions.
Corresponding higher order asymptotic trace formulas are proved in \cite{K-ZAA}
(see also \cite[Section~10.91]{BS06}).

Very recently, it was observed in \cite{BKS-Krein100} that the approach of
\cite{BS80} with some improvements of \cite{K-JMAA} is powerful enough to deliver
higher order asymptotic formulas for Toeplitz determinants with symbols in
generalized Krein algebras $(K_{p,q}^{\alpha,\beta})_{N\times N}$ with
$1<p,q<\infty$, $0<\alpha,\beta<1$, and $1/p+1/q=\alpha+\beta\in(0,1)$.
This is another example of ``insufficient smoothness" because one cannot
guarantee that $T(a)T(a^{-1})-I$ is of trace class whenever
$a\in (K_{p,q}^{\alpha,\beta})_{N\times N}$. Notice that generalized
Krein algebras contain discontinuous functions in contrast to
H\"older-Zygmund spaces and weighted Wiener algebras, which consist of
continuous functions only.
\subsection{About this paper}
In this paper, we will study asymptotics of Toeplitz matrices with symbols
in generalized H\"older spaces following the approach of \cite{BS80}.
Our results improve earlier results by B\"ottcher and Silbermann for
$C_{N\times N}^\gamma$, $0<\gamma<1$, because the scale of generalized H\"older
spaces is finer than the scale of H\"older spaces $C^\gamma$, $0<\gamma<1$
(although we will not consider generalizations of the case $\gamma\ge 1$).

The paper is organized as follows. Section~\ref{sec:preliminaries} contains
definitions of Schatten-von Neumann classes and regularized operator
determinants, as well as definitions of the Bari-Stechkin class and generalized
H\"older spaces $\cH^\omega$ and their subspaces $\cH_0^\omega$. Our main results
refining Theorems~\ref{th:Widom1} and \ref{th:Widom2} are stated in the end of
Section~\ref{sec:preliminaries}. In Section~\ref{sec:abstract}, we present an
abstract approach from \cite{BS80} (see also \cite{K-JMAA}) to higher order
asymptotic formulas for block Toeplitz matrices. To apply these results it
is necessary to check that the symbol admits canonical left and right bounded
Wiener-Hopf factorizations, at least one of the factors is continuous, and some
products of Hankel operators belong to the Schatten-von Neumann class
$\cC_m(H_N^2)$ for $m\in\N$. In Section~\ref{sec:factorization}, we collect
necessary information about Wiener-Hopf factorization in decomposing algebras
of continuous functions and verify that the algebras $(\cH^\omega)_{N\times N}$
and $(\cH_0^\omega)_{N\times N}$ have the factorization property. In
Section~\ref{sec:approximation}, we prove simple sufficient conditions for the
membership in the Schatten-von Neumann classes of products of Hankel operators
with symbols in $(\cH^\omega)_{N\times N}$. These results are based on the
classical Jackson theorem on the best uniform approximation. In
Section~\ref{sec:proofs}, we prove our asymptotic formulas on the basis of the
results of Sections~\ref{sec:abstract}--\ref{sec:approximation}.
\section{Preliminaries and the main results}
\label{sec:preliminaries}
\subsection{Schatten-von Neumann classes and operator determinants}
\label{sec:Schatten-Neumann}
Let $H$ be a separable Hilbert space, $\cB(H)$ be the Banach algebra of all
bounded linear operators on $H$, $\cC_0(H)$ be the set of all finite-rank
operators, and $\cC_\infty(H)$ be the closed two-sided ideal of all compact
operators on $H$. Given $A\in\cB(H)$ define
$s_n(A):=\inf\{\|A-F\|_{\cB(H)}: F\in\cC_0(H),\ \dim F(H)\le n\}$
for $n\in\Z_+$.
For $1\le p<\infty$, the collection of all operators $K\in\cB(H)$ satisfying
\[
\|K\|_{\cC_p(H)}:=\Bigg(\sum_{n\in\Z_+}s_n^p(K)\Bigg)^{1/p}<\infty
\]
is denoted by $\cC_p(H)$ and referred to as a \textit{Schatten-von Neumann class}.
Note that $\cC_\infty(H)=\{K\in\cB(H):s_n(K)\to 0\mbox{ as }n\to\infty\}$ and
\[
\|K\|_{\cC_\infty(H)}=\sup_{n\in\Z_+}s_n(K)=\|K\|_{\cB(H)}.
\]
The operators belonging to $\cC_1(H)$ are called \textit{trace class operators}.

Let $A\in\cB(H)$ be an operator of the form $I+K$ with $K\in\cC_1(H)$. If
$\{\lambda_j(K)\}_{j\ge 0}$ denotes the sequence of the nonzero eigenvalues of
$K$ counted up to algebraic multiplicity, then the product
$\prod_{j\ge 0}(1+\lambda_j(K))$ is absolutely convergent. The \textit{determinant}
of $A$ is defined by
\[
\det A=\det(I+K)=\prod_{j\ge 0}(1+\lambda_j(K)).
\]
If $K\in\cC_m(H)$, where $m\in\N\setminus\{1\}$, one can still define a determinant
of $I+K$, but for classes larger than $\cC_1(H)$, the above definition requires a
regularization. A simple computation (see \cite[Lemma~6.1]{Simon77}) shows that then
\[
R_m(K):=(I+K)\exp\Bigg(\sum_{j=1}^{m-1}\frac{(-K)^j}{j}\Bigg)-I\in\cC_1(H).
\]
Thus, it is natural to define
\[
\mathrm{det}_1(I+K):=\det(I+K),
\quad
\mathrm{det}_m(I+K):=\det(I+R_m(K))
\ \mbox{for}\ m\in\N\setminus\{1\}.
\]
One calls $\det_m(I+K)$ the $m$-\textit{regularized determinant} of $A=I+K$.
For more information about Schatten-von Neumann classes and regularized operator
determinants, see \cite[Chap. III--IV]{GK69} and also \cite{Simon77}.
\subsection{The Bari-Stechkin class}
A real-valued function $\varphi$ is said to be \textit{almost increasing}
on an interval $I$ of $\R$ if there is a positive constant $A$ such that
$\varphi(x)\le A\varphi(y)$ for all $x,y\in I$ such that $x\le y$.
One says that $\omega:(0,\pi]\to[0,\infty)$ belongs to the \textit{Bari-Stechkin class}
(see \cite[p.~493]{BS56} and \cite[Chap.~2, Section~2]{GM80})
if $\omega$ is almost increasing on $(0,\pi]$, $\omega(x)>0$ for all $x\in(0,\pi]$, and
\[
\lim_{x\to 0+0}\omega(x)=0,
\quad
\sup_{x>0}\frac{1}{\omega(x)}\int_0^x\frac{\omega(y)}{y}\,dy<\infty,
\quad
\sup_{x>0}\frac{x}{\omega(x)}\int_x^\pi\frac{\omega(y)}{y^2}\,dy<\infty.
\]

To give an example of functions in the Bari-Stechkin class, let us define
inductively the sequence of functions $\ell_k$ on $(x_k,\infty)$ by
$\ell_1(x):=\log x$, $x_1:=1$ and for $k\in\N\setminus\{1\}$,
$\ell_k(x):=\log(\ell_{k-1}(x))$ and $x_k$ such that $\ell_{k-1}(x_k)=1$.
Elementary computations show that for every $\gamma\in(0,1)$ and any finite
sequence $\beta_1,\dots,\beta_m\in\R$ there exists a set of positive constants
$b_1,\dots,b_m$ such that the function
\begin{equation}\label{eq:Bari-Stechkin-example}
\omega(x)=x^\gamma\prod_{k=1}^m\ell_k^{\beta_k}\left(\frac{b_k}{x}\right),
\quad
0<x\le\pi,
\end{equation}
belongs to the Bari-Stechkin class. In particular, $\omega(x)=x^\gamma$,
$0<\gamma<1$, is a trivial example of a function in the Bari-Stechkin class.
\subsection{Generalized H\"older spaces}
The \textit{modulus of continuity} of a bounded function $f:\T\to\C$ is defined
by
\[
\omega(f,x):=\sup_{|h|\le x}\sup_{y\in\R}|f(e^{i(y+h)})-f(e^{iy})|,
\quad 0\le x\le \pi.
\]

Let $\omega$ belong to the Bari-Stechkin class. The generalized H\"older space
$\cH^\omega$ is defined as the set of all continuous functions $f:\T\to\C$
satisfying
\[
|f|_\omega:=\sup_{0<x\le \pi}\frac{\omega(f,x)}{\omega(x)}<\infty.
\]
We will consider also the subspace $\cH_0^\omega$ of functions $f\in\cH^\omega$
such that
\[
\lim_{x\to 0+0}\frac{\omega(f,x)}{\omega(x)}=0.
\]
It is well known that $\cH^\omega$ and $\cH_0^\omega$ are Banach algebras
under the norm
\[
\|f\|_{\cH^\omega}:=\|f\|_C+|f|_\omega.
\]
\subsection{Higher order asymptotic formulas for determinants}
 For $a\in L_{N\times N}^\infty$ and $n\in\Z_+$,
define the operators $P_n$ and $Q_n$ on $H_N^2$ by
\[
P_n:\sum_{k=0}^\infty a_k t^k\mapsto \sum_{k=0}^na_k t^k,
\quad
Q_n:=I-P_n.
\]
The operator $P_nT(a)P_n:P_nH_N^2\to P_nH_N^2$ may be identified with the finite
block Toeplitz matrix $T_n(a):=(a_{j-k})_{j,k=0}^n$.

If $\cA$ is a unital algebra, then its group of all invertible elements is denoted
by $G\cA$. For $1\le p\le\infty$, put $\overline{H^p}:=\{f\in L^p:\overline{f}\in H^p\}$.
Suppose
\begin{eqnarray}
v_-\in (\overline{H^\infty})_{N\times N},
&&
v_+\in H_{N\times N}^\infty,
\label{eq:factors1}
\\
u_-\in G(\overline{H^\infty})_{N\times N},
&&
u_+\in GH_{N\times N}^\infty,
\label{eq:factors2}
\end{eqnarray}
and define
\[
b:=v_-u_+^{-1},
\quad
c:=u_-^{-1}v_+.
\]
\begin{theorem}[Main result 1]
\label{th:HO1}
Let $\omega,\psi$ belong to the Bari-Stechkin class.
Suppose $a\in L_{N\times N}^\infty$ can be factored as $a=u_-u_+$ with
\begin{equation}\label{eq:factors3}
u_-\in G(\cH^\omega\cap \overline{H^\infty})_{N\times N},
\quad
u_+\in G(\cH^\psi\cap H^\infty)_{N\times N},
\end{equation}
and suppose $T(\widetilde{a})$ is invertible on $H_N^2$. Then the following
statements hold.
\begin{enumerate}
\item[(a)]
The function $a$ admits a factorization $a=v_+v_-$, where
$v_-\in G(\overline{H^\infty})_{N\times N}$ and $v_+\in GH_{N\times N}^\infty$.

\item[(b)]
If
\begin{equation}\label{eq:HO1-1}
\sum_{k=1}^\infty \omega\left(\frac{1}{k}\right)\psi\left(\frac{1}{k}\right)<\infty,
\end{equation}
then $T(a)T(a^{-1})-I$ and $T(\widetilde{c})T(\widetilde{b})-I$ belong to $\cC_1(H_N^2)$
and
\[
\lim_{n\to\infty}\frac{\det T_n(a)}{G(a)^{n+1}}
=\mathrm{det}_1T(a)T(a^{-1})
=\frac{1}{\mathrm{det}_1 T(\widetilde{c})T(\widetilde{b})}.
\]

\item[(c)]
If $m\in\N\setminus\{1\}$ and
\begin{equation}\label{eq:HO1-2}
\sum_{k=1}^\infty
\left[\omega\left(\frac{1}{k}\right)\psi\left(\frac{1}{k}\right)\right]^m<\infty,
\end{equation}
then $T(\widetilde{c})T(\widetilde{b})-I\in\cC_m(H_N^2)$ and
\begin{equation}\label{eq:HO1-3}
\lim_{n\to\infty}\frac{\det T_n(a)}{G(a)^{n+1}}
\exp\left\{
-\sum_{j=1}^{m-1}\frac{1}{j}\tr\left[\left(\sum_{k=0}^{m-1}F_{n,k}(b,c)\right)^j\right]
\right\}
=\frac{1}{\mathrm{det}_m T(\widetilde{c})T(\widetilde{b})},
\end{equation}
where
\[
F_{n,k}(b,c):=P_nT(c)Q_n\big(Q_nH(b)H(\widetilde{c})Q_n\big)^kQ_nT(b)P_n
\quad (n,k\in\Z_+).
\]

\item[(d)]
Suppose $m\in\N\setminus\{1\}$. If \eqref{eq:HO1-2} is fulfilled and
\begin{equation}\label{eq:HO1-4}
\lim_{n\to\infty}
\Bigg\{
\left[\omega\left(\frac{1}{n}\right)\psi\left(\frac{1}{n}\right)\right]^{m-1}
\sum_{j=1}^n
\omega\left(\frac{1}{j}\right)\psi\left(\frac{1}{j}\right)
\Bigg\}
=0,
\end{equation}
then one can remove $F_{n,m-1}(b,c)$ in \eqref{eq:HO1-3}, that is,
\begin{equation}\label{eq:HO1-5}
\lim_{n\to\infty}\frac{\det T_n(a)}{G(a)^{n+1}}
\exp\left\{
-\sum_{j=1}^{m-1}\frac{1}{j}\tr\left[\left(\sum_{k=0}^{m-2}F_{n,k}(b,c)\right)^j\right]
\right\}
=\frac{1}{\mathrm{det}_m T(\widetilde{c})T(\widetilde{b})}.
\end{equation}

\item[(e)]
If $m\in\N$ and \eqref{eq:HO1-2} is fulfilled, then there
exists a nonzero constant $E(a)$ such that
\begin{equation}\label{eq:HO1-6}
\begin{split}
\log\det T_n(a) &= (n+1)\log G(a)+\log E(a)
\\
&\quad+\tr
\left[\sum_{\ell=1}^n\sum_{j=1}^{m-1}\frac{1}{j}\left(
\sum_{k=0}^{m-j-1}G_{\ell,k}(b,c)
\right)^j\right]
\\
&\quad+
O\left(\sum_{k=n+1}^\infty
\left[\omega\left(\frac{1}{k}\right)\psi\left(\frac{1}{k}\right)\right]^m\right)
\end{split}
\end{equation}
as $n\to\infty$, where
\[
G_{\ell,k}(b,c):=
P_0T(c)Q_\ell\big(Q_\ell H(b)H(\widetilde{c})Q_\ell\big)^kQ_\ell T(b)P_0
\quad(\ell,k\in\Z_+).
\]

\item[(f)]
If, under the assumptions of part {\rm(e)},
\[
u_-\in G(\cH_0^\omega\cap \overline{H^\infty})_{N\times N}
\quad\mbox{or}\quad
u_+\in G(\cH_0^\psi\cap H^\infty)_{N\times N},
\]
then \eqref{eq:HO1-6} holds with $O(\dots)$ replaced by $o(\dots)$.
\end{enumerate}
\end{theorem}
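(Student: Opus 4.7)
The plan is to reduce the whole theorem to the abstract asymptotic machinery of Section~\ref{sec:abstract}. That machinery applies whenever $a$ admits a canonical left bounded Wiener--Hopf factorization $a = u_- u_+$, a canonical right bounded factorization $a = v_+ v_-$, at least one factor is continuous, and the Hankel products $H(b)H(\widetilde{c})$ and $H(\widetilde{b})H(c)$ lie in the appropriate Schatten--von Neumann class $\cC_m(H_N^2)$. Producing these inputs is the entire content of the proof; parts (b)--(f) are then read off from the abstract formulas specialized to the present smoothness scale.

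For (a), the factorization $a = u_- u_+$ already places $a$ inside the decomposing Banach algebra $(\cH^\omega + \cH^\psi)_{N\times N}$ of continuous matrix functions. Section~\ref{sec:factorization} establishes the canonical factorization property for such an algebra, so invertibility of $T(\widetilde a)$ on $H_N^2$ is equivalent to the existence of a canonical right bounded factorization $a = v_+ v_-$. This settles (a) and simultaneously makes $b = v_- u_+^{-1}$ and $c = u_-^{-1} v_+$ well-defined bounded matrix functions; inverse-closedness of the H\"older algebras transfers smoothness to $u_\pm^{-1}$.

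The main technical obstacle is the Schatten-class control of the Hankel products. By Jackson's theorem any $f \in \cH^\omega$ admits a trigonometric polynomial approximation of degree $\le n$ within $O(\omega(1/n))$, and the Adamyan--Arov--Krein link between Hankel singular values and best approximations then yields $s_n(H(f)) = O(\omega(1/n))$, with the analogue for $\cH^\psi$. Applied to $b$ and $\widetilde c$, each of which pairs an $\cH^\omega$-ingredient with an $\cH^\psi$-ingredient, the multiplicativity of $s$-numbers gives
\[
s_n\bigl(H(b)H(\widetilde c)\bigr) = O\bigl(\omega(1/n)\psi(1/n)\bigr),
\]
and similarly for $H(\widetilde b)H(c)$. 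Summing $m$-th powers, condition \eqref{eq:HO1-1} forces these products into $\cC_1$ and \eqref{eq:HO1-2} into $\cC_m$; the Bari--Stechkin regularity of $\omega$ and $\psi$ is used precisely here, to convert modulus-of-continuity bounds into summable series and to compare tails $\sum_{k>n}$ with values at $n$.

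With these two ingredients assembled, (b) and (c) are direct specializations of the abstract Szeg\H{o}-type formulas, with $\mathrm{det}_1$ or $\mathrm{det}_m$ appearing exactly at the Schatten threshold. For (d) one needs the trace-norm bound
\[
\|F_{n,m-1}(b,c)\|_{\cC_1} = O\!\left(\bigl[\omega(1/n)\psi(1/n)\bigr]^{m-1}\sum_{j=1}^n\omega(1/j)\psi(1/j)\right),
\]
which together with \eqref{eq:HO1-4} allows the removal of that term from \eqref{eq:HO1-3}. For (e), setting $E(a) := 1/\mathrm{det}_m T(\widetilde c)T(\widetilde b)$ and telescoping the increments $F_{\ell,k} - F_{\ell-1,k}$ into the traces of $G_{\ell,k}$ recasts the formula in the stated form, the tail of \eqref{eq:HO1-2} producing the $O(\cdot)$ remainder. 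Finally, (f) follows because the $\cH_0^\omega$ or $\cH_0^\psi$ hypothesis converts each defining supremum into a genuine little-$o$, upgrading every $O(\omega(1/k)\psi(1/k))$ bound in the preceding analysis to the corresponding $o(\cdot)$, and hence so does the final error term.
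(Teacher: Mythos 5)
For parts (a)--(c) your architecture is essentially the paper's: embed $a$ into a decomposing algebra with the factorization property to get the second factorization, control $s$-numbers of the relevant Hankel operators by Jackson's theorem and finite-rank (best-approximation) cutoffs, multiply the estimates (the paper uses Horn's inequality, as in Lemma~\ref{le:product}), and then invoke the abstract B\"ottcher--Silbermann machinery (Theorem~\ref{th:HO-abstract}). Two details need repair, though. First, your container algebra $(\cH^\omega+\cH^\psi)_{N\times N}$ is not among the algebras for which Section~\ref{sec:factorization} verifies the hypotheses, and it is not clear that this sum is an inverse-closed decomposing Banach algebra at all; the paper instead uses the Bari--Stechkin property to dominate $\omega$ and $\psi$ by a power $x^\gamma$ and works in the standard H\"older space $C^\gamma_{N\times N}$, to which Theorem~\ref{th:factorization-Hoelder}(b) applies (alternatively one could use $\cH^{\omega+\psi}$, since $\omega+\psi$ is again Bari--Stechkin, but this must be said and checked). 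Second, the products required by Theorem~\ref{th:HO-abstract} are $H(b)H(\widetilde{c})$ and $H(\widetilde{c})H(b)$, together with $H(a)H(\widetilde{a}^{-1})=I-T(a)T(a^{-1})$ for part (b); your pair ``$H(b)H(\widetilde c)$ and $H(\widetilde b)H(c)$'' is not the right one, although the estimates are of the same kind.

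The genuine gaps are in (d) and, more seriously, (e). For (d) you assert the trace-norm bound $\|F_{n,m-1}(b,c)\|_{\cC_1}=O([\omega(1/n)\psi(1/n)]^{m-1}\sum_{j\le n}\omega(1/j)\psi(1/j))$, but nothing in your toolkit yields it: the middle factor $(Q_nH(b)H(\widetilde c)Q_n)^{m-1}$ lies only in $\cC_{m/(m-1)}$, and splitting $P_n$ into rank-$N$ blocks on both sides produces a product of two sums, not the single sum you claim. What is actually needed (and provable) is a bound on the trace itself: $|\tr F_{n,m-1}(b,c)|\le C_N\sum_{j=0}^n\|\Delta_jF_{n,m-1}(b,c)\Delta_j\|$, combined with the refined truncation estimates $\|\Delta_jT(c)Q_n\|=O(\omega(1/(n-j+1)))$ and $\|Q_nT(b)\Delta_j\|=O(\psi(1/(n-j+1)))$ of Lemma~\ref{le:trunc} (this is Lemma~\ref{le:traceF}). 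For (e), your ``telescoping $F_{\ell,k}-F_{\ell-1,k}$ into the traces of $G_{\ell,k}$'' is not an identity (both $P_n$ and $Q_n$ change with $n$, and $G_{\ell,k}$ carries $P_0$, not an increment), and it bypasses the actual engine of the proof: Lemma~\ref{le:BS}, which converts a summable decomposition of $\tr\log\{I-\sum_{k}G_{n,k}(b,c)\}$ into the formula $\log\det T_n(a)=(n+1)\log G(a)+\tr(M_1+\dots+M_n)+\log E(a)+\sum_{k>n}s_k$, together with Lemma~\ref{le:decomposition}, which expands $\tr\log\{I-\sum_kG_{n,k}\}$ to order $m-1$ with remainder $s_n=O([\omega(1/(n+1))\psi(1/(n+1))]^m)$; summability of $s_n$ under \eqref{eq:HO1-2} is exactly what produces $E(a)$ and the $O(\sum_{k>n}[\omega(1/k)\psi(1/k)]^m)$ tail. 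Moreover, your identification $E(a)=1/\mathrm{det}_m T(\widetilde c)T(\widetilde b)$ is unjustified and is not claimed in the paper: $E(a)$ comes from Lemma~\ref{le:BS} (and absorbs normalization choices); equating it with the regularized determinant would require showing that the $F$-corrections in \eqref{eq:HO1-3} and the cumulative $G$-corrections in \eqref{eq:HO1-6} differ by $o(1)$, which is not established. Part (f) is directionally right, but it must be run through the entrywise estimate of Lemma~\ref{le:decomposition} so that the $\cH_0^\omega$ (or $\cH_0^\psi$) hypothesis on the single factor $u_-^{-1}$ (or $u_+^{-1}$) yields an $\eps^m$ gain in the tail $\sum_{k>n}|s_k|$; as written it inherits the gap from (e).
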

Let $\alpha,\beta\in(0,1)$ and $\omega(x)=x^\alpha$, $\psi(x)=x^\beta$.
If $\alpha+\beta>1$, then \eqref{eq:HO1-1} holds. If $\alpha+\beta>1/m$
for some $m\in\N\setminus\{1\}$, then \eqref{eq:HO1-2} and \eqref{eq:HO1-4}
are fulfilled and we arrive at the theorem of B\"ottcher and Silbermann
\cite[Theorems~10.35(ii) and 10.37(ii)]{BS06} for standard H\"older spaces.
It seems that part (f) is new even for standard H\"older spaces.
\subsection{Refinements of the Szeg\H{o}-Widom limit theorems}
The case of $\omega=\psi$  in Theorem~\ref{th:HO1} is of particular importance.
In this case we will prove the following refinement of the Szeg\H{o}-Widom limit
theorems.
\begin{theorem}[Main result 2]
\label{th:HO2}
Let $\omega$ belong to the Bari-Stechkin class and let $\cH$ be either $\cH^\omega$
or $\cH_0^\omega$. Suppose
\begin{equation}\label{eq:HO2-1}
\sum_{k=1}^\infty \left[\omega\left(\frac{1}{k}\right)\right]^2<\infty
\end{equation}
and put
\[
\delta(n,\cH)
:=
\left\{
\begin{array}{lll}
\displaystyle
O\left(\sum_{k=n+1}^\infty\left[\omega\left(\frac{1}{k}\right)\right]^2\right)
&\mbox{if}&\cH=\cH^\omega,
\\
\displaystyle
o\left(\sum_{k=n+1}^\infty\left[\omega\left(\frac{1}{k}\right)\right]^2\right)
&\mbox{if}&\cH=\cH_0^\omega.
\end{array}
\right.
\]
\begin{enumerate}
\item[(a)]
We have $\cH_{N\times N}\subset (K_{2,2}^{1/2,1/2})_{N\times N}$.

\item[(b)]
If $a\in\cH_{N\times N}$ and the Toeplitz operators
$T(a)$ and $T(\widetilde{a})$ are invertible on $H_N^2$, then
\eqref{eq:Widom1} holds with $o(1)$ replaced by $\delta(n,\cH)$.

\item[(c)]
If $a\in\cH_{N\times N}$ and $f$ is analytic on
$\mathrm{sp}\,T(a)\cup\mathrm{sp}\,T(\widetilde{a})$, then
\eqref{eq:Widom2} holds with $o(1)$ replaced by $\delta(n,\cH)$.
\end{enumerate}
\end{theorem}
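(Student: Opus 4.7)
The plan is to derive all three parts from results already established in the paper.

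\textbf{Part (a)} is an embedding of algebras. Since membership in $(K_{2,2}^{1/2,1/2})_{N\times N}$ is the condition $\sum_k\|a_k\|^2|k|<\infty$, it suffices to control the tail Fourier energy. By the Jackson-type estimate to be set out in Section~\ref{sec:approximation}, every $a\in(\cH^\omega)_{N\times N}$ satisfies $E_n(a)\le C\,\omega(1/n)$, where $E_n$ denotes the best uniform approximation by matrix trigonometric polynomials of degree at most $n$. Parseval's identity then yields $\sum_{|k|\ge n}\|a_k\|^2\le C\,\omega(1/n)^2$, and an Abel summation converts \eqref{eq:HO2-1} into $\sum_k\|a_k\|^2|k|<\infty$.

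\textbf{Part (b)} follows from Theorem~\ref{th:HO1} specialized to $\psi=\omega$ and $m=1$. In that case condition \eqref{eq:HO1-1} coincides with \eqref{eq:HO2-1}, and the inner sum $\sum_{j=1}^{m-1}$ in \eqref{eq:HO1-6} is empty, so the formula reduces to
\[
\log\det T_n(a)=(n+1)\log G(a)+\log E(a)+O\!\left(\sum_{k=n+1}^\infty\omega(1/k)^2\right).
\]
The required factorization $a=u_-u_+$ with factors in the respective Hardy subalgebras of $\cH$ is delivered by the Wiener-Hopf factorization theory for decomposing algebras developed in Section~\ref{sec:factorization}, invoked under the invertibility of both $T(a)$ and $T(\widetilde a)$. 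Theorem~\ref{th:HO1}(b) identifies $E(a)=\det_1 T(a)T(a^{-1})$, so \eqref{eq:Widom1} holds with remainder $\delta(n,\cH^\omega)$; the $o(\cdot)$ version for $\cH=\cH_0^\omega$ comes from Theorem~\ref{th:HO1}(f).

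\textbf{Part (c)} reduces to part (b) through the contour-integral representation used by Widom. For $n$ sufficiently large, $\mathrm{sp}\,T_n(a)\subset\Omega$, so
\[
\tr f(T_n(a))=\frac{1}{2\pi i}\int_{\partial\Omega}f(\lambda)\,\tr\bigl(\lambda I-T_n(a)\bigr)^{-1}d\lambda
=\frac{1}{2\pi i}\int_{\partial\Omega}f(\lambda)\frac{d}{d\lambda}\log\det T_n(\lambda I-a)\,d\lambda.
\]
Applying part (b) to $\lambda I-a\in\cH_{N\times N}$, which is legitimate since $T(\lambda I-a)=\lambda I-T(a)$ and $T(\widetilde{\lambda I-a})=\lambda I-T(\widetilde a)$ are invertible on a neighborhood of $\partial\Omega$, the locally holomorphic function
\[
D_n(\lambda):=\log\det T_n(\lambda I-a)-(n+1)\log G(\lambda I-a)-\log E(\lambda I-a)
\]
is of order $\delta(n,\cH)$ uniformly in a neighborhood of $\partial\Omega$. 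Cauchy's formula then yields the same bound for the single-valued function $D_n'(\lambda)$, and integrating $f(\lambda)D_n'(\lambda)$ over $\partial\Omega$ produces an error of order $\delta(n,\cH)$. The two remaining contour integrals evaluate to $(n+1)G_f(a)$ and $E_f(a)$ by the standard computations underlying Theorem~\ref{th:Widom2}.

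\emph{The main obstacle} lies in part (c): establishing the uniformity of $D_n(\lambda)$ on $\partial\Omega$. The implicit constants in the $O$-bound of part (b) are controlled by the $\cH$-norms of the canonical factors of the symbol and of their inverses, so one must verify that the factorization of $\lambda I-a$ depends continuously on $\lambda\in\partial\Omega$ in the $\cH$-norm. This will follow from the decomposing-algebra framework of Section~\ref{sec:factorization} combined with standard perturbation theory for invertible operators, but it requires careful tracking of constants through the proof of part (b).
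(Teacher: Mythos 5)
Parts (a) and (b) of your proposal are correct. For (a) you take a slightly different route from the paper: the paper shows via Jackson's theorem that $s_n\big(H(a)\big)=O\big(\omega(1/n)\big)$ and $s_n\big(H(\widetilde a)\big)=O\big(\omega(1/n)\big)$, concludes $H(a),H(\widetilde a)\in\cC_2(H_N^2)$, and invokes the characterization of $(K_{2,2}^{1/2,1/2})_{N\times N}$ as the set of symbols with Hilbert--Schmidt Hankel operators; your argument (Jackson, then Parseval for the tail $\sum_{|k|>n}\|a_k\|^2\le C\,[\omega(1/(n+1))]^2$, then Abel summation) verifies the defining series of the Krein algebra directly and is a valid, arguably more elementary, alternative. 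Part (b) is essentially the paper's proof: factorize via Theorem~\ref{th:factorization-Hoelder}, apply Theorem~\ref{th:HO1}(e),(f) with $\psi=\omega$, $m=1$ (empty trace sum), and identify $E(a)=\mathrm{det}_1T(a)T(a^{-1})$ by comparison with Theorem~\ref{th:HO1}(b).

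The genuine gap is in part (c), and it is precisely the step you defer as ``careful tracking of constants.'' The paper does not track constants through the statement of part (b); it secures uniformity in $\lambda$ beforehand by three specific devices. First, Theorem~\ref{th:stability} (Shubin's stability theorem for canonical Wiener--Hopf factorizations) is what allows the factors $u_\pm^{\pm1}(\lambda),v_\pm^{\pm1}(\lambda)$ of $a-\lambda$ to be chosen continuously in the $\cH$-norm on a compact neighborhood $\Sigma$ of $\partial\Omega$; this is a nontrivial factorization-stability result in the matrix case, not standard perturbation theory of invertible operators. Second, Lemma~\ref{le:decomposition} is formulated with the compact parameter set $\Sigma$ exactly so that the remainder $s_n(\lambda)$ in the decomposition fed into Lemma~\ref{le:BS} is bounded uniformly in $\lambda$. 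Third, and most importantly, for $\cH=\cH_0^\omega$ the claim is a uniform \emph{little-o} estimate, which no amount of constant-tracking yields: the paper needs the separate compactness argument of Lemma~\ref{le:uniform}, giving $\sup_{\lambda\in\Sigma}\max_{\alpha,\beta}\omega\big([u_\pm^{-1}(\lambda)]_{\alpha,\beta},1/n\big)=o\big(\omega(1/n)\big)$; your proposal omits this ingredient entirely. A smaller point: your Cauchy-estimate bound on $D_n'(\lambda)$ (which also raises branch and two-dimensional-neighborhood issues) is unnecessary --- the paper's error term is the explicit function $\sum_{k>n}s_k(\lambda)$, and multiplying the identity by $-f'(\lambda)$ and integrating by parts over the closed contour requires only a uniform sup bound on the error itself, not on its derivative.
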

For $\cH^\omega=C^\gamma$ with $\gamma\in(1/2,1)$ and $O(n^{1-2\gamma})$
in place of $\delta(n,\cH)$, parts (a) and (b) are already in \cite{BS80}
(see also \cite{BW06}) and part (c) is in \cite{K-IWOTA05}. Notice that the
scale of generalized H\"older spaces is finer than the scale of standard
H\"older spaces. For instance, for every $\gamma\in(0,1)$ there exist functions
$\omega_1$ and $\omega_2$ of the form \eqref{eq:Bari-Stechkin-example} such that
\[
\bigcup_{0<\eps<1-\gamma}C^{\gamma+\eps}\subset
\cH^{\omega_1}\subset
C^\gamma\subset
\cH^{\omega_2}\subset
\bigcap_{0<\eps<\gamma}C^{\gamma-\eps},
\]
where each of the embeddings is proper (see \cite[Section~II.3]{GM80}).
Hence, Theorems~\ref{th:HO1} and \ref{th:HO2} refine corresponding results for
standard H\"older spaces.
\section{Higher order asymptotic formulas:\\ the approach of B\"ottcher and Silbermann}
\label{sec:abstract}
\subsection{Asymptotic formulas involving regularized operator determinants}
The following result goes back to B\"ottcher and Silbermann \cite{BS80}
(see also \cite[Sections 6.15 and 6.20]{BS83} and \cite[Sections 10.34 and 10.37]{BS06}).
\begin{theorem}\label{th:HO-abstract}
Suppose $a\in L_{N\times N}^\infty$ satisfies the following assumptions:
\begin{enumerate}
\item[(i)]
there are two factorizations $a=u_-u_+=v_+v_-$, where
$u_-,v_-\in G(\overline{H^\infty})_{N\times N}$ and $u_+,v_+\in GH_{N\times N}^\infty$;

\item[(ii)]
$u_-\in C_{N\times N}$ or $u_+\in C_{N\times N}$.
\end{enumerate}
Then the following statements are true.
\begin{enumerate}
\item[(a)]
If $H(a)H(\widetilde{a}^{-1})\in\cC_1(H_N^2)$, then
\[
\lim_{n\to\infty}\frac{\det T_n(a)}{G(a)^{n+1}}=\mathrm{det}_1 T(a)T(a^{-1}).
\]

\item[(b)]
If $H(b)H(\widetilde{c})$ and $H(\widetilde{c})H(b)$ belong to $\cC_m(H_N^2)$
for some $m\in\N$, then \eqref{eq:HO1-3} is fulfilled.

\item[(c)]
If $H(b)H(\widetilde{c})$ and $H(\widetilde{c})H(b)$ belong to $\cC_m(H_N^2)$
for some $m\in\N\setminus\{1\}$ and
\[
\lim_{n\to\infty}\tr F_{n,m-1}(b,c)=0,
\]
then \eqref{eq:HO1-5} holds.
\end{enumerate}
\end{theorem}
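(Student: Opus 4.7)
The approach is the one of Böttcher and Silbermann, following the cited references. Direct computation from the two factorizations of $a$ gives $bc = cb = I$, so $c = b^{-1}$. Hypothesis (i) ensures that $T(u_\pm), T(v_\pm)$ are (block) triangular factors whose finite sections are invertible with $\det T_n(u_+) = G(u_+)^{n+1}$ and $\det T_n(v_+) = G(v_+)^{n+1}$, and one has $G(a) = G(u_-) G(u_+) = G(v_+) G(v_-)$. Combining the two factorizations with the Hankel--Toeplitz identity $T(fg) = T(f) T(g) + H(f) H(\widetilde g)$, and using hypothesis (ii) to make the relevant remainder operators compact, one derives a master algebraic identity
\[
T_n(a) = T_n(v_+)\bigl(P_n - \Xi_n\bigr) T_n(u_+),\qquad \Xi_n := P_n T(c) Q_n T(b) P_n,
\]
acting on $P_n H_N^2$. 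Taking determinants yields
\[
\frac{\det T_n(a)}{G(a)^{n+1}} = \det\bigl(P_n - \Xi_n\bigr).
\]

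\emph{Part (a).} When $H(a) H(\widetilde a^{-1}) \in \cC_1(H_N^2)$, the factorization identities give $H(b) H(\widetilde c) \in \cC_1$, and $\Xi_n$ converges in trace norm to $T(c) T(b) - I$. Continuity of $\det_1$ on $I + \cC_1$, combined with the Widom identity $T(a) T(a^{-1}) = I - H(a) H(\widetilde a^{-1})$ and the factorization calculus, identifies the limit as $\det_1 T(a) T(a^{-1})$.

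\emph{Parts (b) and (c).} Under the weaker assumption $H(b) H(\widetilde c), H(\widetilde c) H(b) \in \cC_m$, the operator $\Xi_n$ lies only in $\cC_m$, so we pass to the regularized determinant
\[
\det(P_n - \Xi_n) = \det_m(P_n - \Xi_n)\,\exp\!\Bigl(\sum_{j=1}^{m-1}\frac{1}{j}\tr \Xi_n^j\Bigr).
\]
Expanding $T(b)$ geometrically inside $\Xi_n$ gives $\Xi_n = \sum_{k \ge 0} F_{n,k}(b,c)$, a series convergent in $\cC_m$. The main obstacle is the algebraic identification
\[
\tr \Xi_n^j = \tr\left[\Bigl(\sum_{k=0}^{m-1} F_{n,k}(b,c)\Bigr)^{\!j}\right] + o(1), \qquad 1 \le j \le m-1,
\]
obtained by truncating the series at $k = m-1$ and controlling the $\cC_1$-tail using both Schatten hypotheses -- the second, $H(\widetilde c) H(b) \in \cC_m$, is needed to cycle traces when the $j$-th power is only in $\cC_m$. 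Letting $n \to \infty$ in $\det_m(P_n - \Xi_n)$ and applying the Widom-type identity to $\widetilde c, \widetilde b$ identifies the limit as $1/\det_m T(\widetilde c) T(\widetilde b)$, yielding \eqref{eq:HO1-3}. For part (c), the additional hypothesis $\tr F_{n,m-1}(b,c) \to 0$ allows one to drop the top-order term from the inner sum modulo $o(1)$, giving \eqref{eq:HO1-5}.
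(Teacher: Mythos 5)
Your plan founders at the ``master algebraic identity'': the claim $T_n(a)=T_n(v_+)\bigl(P_n-\Xi_n\bigr)T_n(u_+)$ with $\Xi_n:=P_nT(c)Q_nT(b)P_n$, and hence $\det T_n(a)/G(a)^{n+1}=\det(P_n-\Xi_n)$, is simply false. Test it with $N=1$, $a(t)=(1-\alpha t^{-1})(1-\beta t)$, $u_-=v_-=1-\alpha t^{-1}$, $u_+=v_+=1-\beta t$, $n=0$: then $\det T_0(a)=1+\alpha\beta$ and $G(a)=1$, while a direct computation gives $\det(P_0-\Xi_0)=1-\alpha\beta(1-\alpha\beta)$. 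The failure is not just at finite $n$: using the standard identity $P_nT(c)Q_nT(b)P_n=W_nH(\widetilde{c})H(b)W_n$ (with $W_n$ the flip $\sum_{k=0}^n x_kt^k\mapsto\sum_{k=0}^n x_{n-k}t^k$), one sees that when $H(\widetilde{c})H(b)\in\cC_1(H_N^2)$ one has $\det(P_n-\Xi_n)\to\det\bigl(I-H(\widetilde{c})H(b)\bigr)=\mathrm{det}_1T(\widetilde{c})T(\widetilde{b})$, so your identity would force $\lim_n\det T_n(a)/G(a)^{n+1}=\mathrm{det}_1T(\widetilde{c})T(\widetilde{b})$, the \emph{reciprocal} of the correct limit in \eqref{eq:HO1-3} (equivalently of $\mathrm{det}_1T(a)T(a^{-1})$ in part (a)). There is also an internal inconsistency: you define $\Xi_n=F_{n,0}(b,c)$ but later assert ``expanding $T(b)$ geometrically'' gives $\Xi_n=\sum_{k\ge0}F_{n,k}(b,c)$; these operators differ, since the terms with $k\ge1$ contain the factors $(Q_nH(b)H(\widetilde{c})Q_n)^k$. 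A further (smaller) defect: $\det\bigl(T_n(v_+)T_n(u_+)\bigr)=\bigl(G(v_+)G(u_+)\bigr)^{n+1}$, which need not equal $G(a)^{n+1}$, so even the normalization in ``taking determinants yields'' is off in general.

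For comparison, the paper does not reprove this theorem at all: part (a) is quoted from \cite[Corollary~10.27]{BS06}, part (b) from \cite[Theorem~15]{K-JMAA}, and part (c) from part (b) together with \cite[Propositions~6, 13, 14]{K-JMAA}. If you want a genuine proof along Böttcher--Silbermann lines, the correct algebraic backbone involves the full Neumann series
\[
\sum_{k=0}^\infty F_{n,k}(b,c)=P_nT(c)Q_n\bigl(Q_n-Q_nH(b)H(\widetilde{c})Q_n\bigr)^{-1}Q_nT(b)P_n,
\]
which exists only for $n$ large enough -- this is exactly where hypothesis (ii) is used, to guarantee $\|Q_nH(b)H(\widetilde{c})Q_n\|<1$ eventually -- and an exact (Schur-complement type) relation between $\det T_n(a)/G(a)^{n+1}$ and $\det\bigl(P_n-\sum_kF_{n,k}(b,c)\bigr)$ that, after passing to $\mathrm{det}_m$ and subtracting the trace corrections $\tr[(\sum_{k=0}^{m-1}F_{n,k})^j]$, produces $1/\mathrm{det}_mT(\widetilde{c})T(\widetilde{b})$. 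Your sketch of parts (b), (c) (truncation of the series at $k=m-1$, trace cyclicity using both Schatten hypotheses, dropping $F_{n,m-1}$ when its trace vanishes asymptotically) is the right spirit, but it is built on the wrong identity, so as written the argument does not establish the theorem.
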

\begin{proof}
Part (a) follows from \cite[Corollary~10.27]{BS06}. Part (b) is proved in the
present form in \cite[Theorem~15]{K-JMAA}. Part (c) follows from part (b)
and \cite[Propositions~6,13, and 14]{K-JMAA}.
\end{proof}
Notice that hypothesis (ii) can be replaced by a weaker hypothesis
(see \cite[Section~10.34]{BS06}), which allows
us to work with two discontinuous factors $u_-$ and $u_+$. This is useful
in the case of generalized Krein algebras $(K_{p,q}^{\alpha,\beta})_{N\times N}$
(see \cite{BKS-Krein100}).
\subsection{Decomposition of the logarithm of Toeplitz determinants}
The following lemma is an important step in the proof of
Theorems~\ref{th:HO1}(e), (f) and Theorem~\ref{th:HO2}. It was obtained in
\cite{BS80} (see also \cite[Section~6.16]{BS83} and \cite[Section~10.34]{BS06}).
\begin{lemma}\label{le:BS}
Suppose $a\in L_{N\times N}^\infty$ satisfies hypotheses {\rm(i)} and {\rm(ii)}
of Theorem~{\rm\ref{th:HO-abstract}}. Suppose for all sufficiently large
$n$ (say, $n\ge n_0$) there exists a decomposition
\[
\tr\log\left\{I-\sum_{k=0}^\infty G_{n,k}(b,c)\right\}
=-\tr\, M_n+s_n,
\]
where $\{M_n\}_{n=n_0}^\infty$ is a sequence of $N\times N$ matrices and
$\{s_n\}_{n=n_0}^\infty$ is a sequence of complex numbers. If
$\sum_{n=n_0}^\infty |s_n|<\infty$, then there exists a constant $E(a)\ne 0$,
depending on $\{M_n\}_{n=n_0}^\infty$ and arbitrarily chosen $N\times N$ matrices
$M_1,\dots,M_{n_0-1}$, such that for all $n\ge n_0$,
\[
\log\det T_n(a)=(n+1)\log G(a)+\tr(M_1+\dots M_n)+\log E(a)+\sum_{k=n+1}^\infty s_k.
\]
\end{lemma}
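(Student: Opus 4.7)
The plan is a telescoping-sum argument built on a one-step recursion for $\log\det T_n(a)$.

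First, I would invoke the key recursion of B\"ottcher and Silbermann (see \cite[Section~6.16]{BS83} or \cite[Section~10.34]{BS06}, and also \cite{K-JMAA}), which is valid under hypotheses (i) and (ii) of Theorem~\ref{th:HO-abstract} for all $n\ge n_0$ with $n_0$ chosen large enough that $I-\sum_{k=0}^\infty G_{n,k}(b,c)$ is an invertible $N\times N$ matrix (this invertibility and the convergence of the series are part of the BS theory that underlies Theorem~\ref{th:HO-abstract}(a)--(c)). In this range one has the identity
\[
\frac{G(a)\,\det T_{n-1}(a)}{\det T_n(a)}
=\det\Bigl\{I-\sum_{k=0}^\infty G_{n,k}(b,c)\Bigr\},
\]
whence, taking logarithms and using $\log\det=\tr\log$ on finite matrices,
\[
\log\det T_n(a)-\log\det T_{n-1}(a)
=\log G(a)-\tr\log\Bigl\{I-\sum_{k=0}^\infty G_{n,k}(b,c)\Bigr\}.
\]

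Second, I would substitute the hypothesized decomposition $\tr\log\{I-\sum_k G_{n,k}(b,c)\}=-\tr M_n+s_n$ to reduce the recursion to
\[
\log\det T_n(a)-\log\det T_{n-1}(a)=\log G(a)+\tr M_n-s_n,\qquad n\ge n_0.
\]
Summing telescopically from $n_0$ to $n$ yields
\[
\log\det T_n(a)=\log\det T_{n_0-1}(a)+(n-n_0+1)\log G(a)+\sum_{k=n_0}^n\tr M_k-\sum_{k=n_0}^n s_k.
\]

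Third, I would rearrange the constants to match the target formula. Using the absolute convergence $\sum_{n\ge n_0}|s_n|<\infty$, I can write $\sum_{k=n_0}^n s_k=\sum_{k=n_0}^\infty s_k-\sum_{k=n+1}^\infty s_k$, and add and subtract $n_0\log G(a)$ together with $\tr(M_1+\dots+M_{n_0-1})$, then set
\[
\log E(a):=\log\det T_{n_0-1}(a)-n_0\log G(a)-\sum_{k=1}^{n_0-1}\tr M_k-\sum_{k=n_0}^\infty s_k.
\]
This is a well-defined complex number (hence $E(a)=\exp(\log E(a))\ne 0$) that is independent of $n$ and that, by construction, depends on $M_1,\dots,M_{n_0-1}$ only through their sum, so these initial matrices may indeed be chosen arbitrarily. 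Substituting this definition back yields the claimed identity for all $n\ge n_0$.

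The main obstacle is not the telescoping bookkeeping itself, which is routine, but making sure the one-step recursion for $\log\det T_n(a)-\log\det T_{n-1}(a)$ is available in precisely the form above. In particular, one must ensure that hypothesis (ii) of Theorem~\ref{th:HO-abstract} (continuity of at least one Wiener-Hopf factor) and the chosen $n_0$ guarantee both convergence of the series $\sum_k G_{n,k}(b,c)$ and invertibility of $I-\sum_k G_{n,k}(b,c)$; these facts are exactly what is needed to make sense of the hypothesis $\tr\log\{\,\cdot\,\}=-\tr M_n+s_n$ in the first place, and they are established in the references cited above, so they can be used here without further work.
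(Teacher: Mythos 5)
Your proposal is correct and follows essentially the same route as the proof the paper relies on (the argument of B\"ottcher--Silbermann cited from \cite{BS80}, \cite[Section~6.16]{BS83}, \cite[Section~10.34]{BS06}): the exact one-step identity $G(a)\det T_{n-1}(a)/\det T_n(a)=\det\{I-\sum_{k\ge 0}G_{n,k}(b,c)\}$, taking logarithms, telescoping, and using $\sum_{n\ge n_0}|s_n|<\infty$ to absorb the tail and the initial data into $\log E(a)$. The bookkeeping, including the sign conventions and the role of the arbitrarily chosen $M_1,\dots,M_{n_0-1}$, matches the statement.
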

\section{Wiener-Hopf factorization in decomposing algebras of continuous functions}
\label{sec:factorization}
\subsection{Definitions and general theorems}
Let $\cA$ be a Banach algebra continuously embedded in $C$. Suppose $\cA$
contains the set of all rational functions without poles on $\T$ and $\cA$
is inverse closed in $C$, that is, if $a\in\cA$ and $a(t)\ne 0$ for all $t\in\T$,
then $a^{-1}\in\cA$. The sets $\cA_-:=\cA\cap\overline{H^\infty}$ and
$\cA_+:=\cA\cap H^\infty$ are subalgebras of $\cA$.
The algebra $\cA$ is said to be \textit{decomposing} if every function $a\in\cA$
can be represented in the form $a=a_-+a_+$ where $a_\pm\in\cA_\pm$.

Let $\cA$ be a decomposing algebra. A matrix function
$a\in\cA_{N\times N}$ is said to admit a \textit{right} (resp. \textit{left})
\textit{Wiener-Hopf factorization in} $\cA_{N\times N}$ if it can
be represented in the form $a=a_-Da_+$ (resp. $a=a_+Da_-$), where
\[
a_\pm\in G(\cA_\pm)_{N\times N},
\quad
D(t)=\mathrm{diag}\{t^{\kappa_1},\dots,t^{\kappa_N}\},
\quad
\kappa_i\in\Z,
\quad
\kappa_1\le\dots\le\kappa_N.
\]
The integers $\kappa_i$ are usually called
the \textit{right} (resp. \textit{left}) \textit{partial indices} of $a$;
they can be shown to be uniquely determined by $a$. If $\kappa_1=\dots=\kappa_N=0$,
then the respective Wiener-Hopf factorization is said to be \textit{canonical}.

The following result was obtained by Budjanu and Gohberg \cite[Theorem~4.3]{BG68}
and it is contained in \cite[Chap.~II, Corollary~5.1]{CG81} and in
\cite[Theorem~5.7']{LS87}.
\begin{theorem}\label{th:factorization}
Suppose the following two conditions hold for the algebra $\cA$:
\begin{enumerate}
\item[{\rm (a)}]
the Cauchy singular integral operator
\[
(S\varphi)(t):=\frac{1}{\pi i}v.p.\int_\T\frac{\varphi(\tau)}{\tau-t}d\tau
\quad(t\in\T)
\]
is bounded on $\cA$;

\item[{\rm (b)}]
for any function $a\in\cA$, the operator $aS-SaI$ is compact on $\cA$.
\end{enumerate}
Then every matrix function $a\in\cA_{N\times N}$ such that $\det a(t)\ne 0$
for all $t\in\T$ admits a right Wiener-Hopf factorization in $\cA_{N\times N}$.
\end{theorem}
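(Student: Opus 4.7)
The plan is to reduce the factorization problem to the Fredholm property of an associated Toeplitz-type operator on the analytic subalgebra $\cA_+$, after first disposing of the scalar case. Hypothesis (a) gives bounded Riesz projections $P = (I+S)/2$ and $Q = I-P$ on $\cA$ (with the constants placed in $\cA_+$, say), so that $\cA = \cA_- + \cA_+$ splits topologically; the same structure carries elementwise to $\cA^N$.

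For the scalar case $N = 1$, given $a \in \cA$ nowhere zero on $\T$, let $\kappa$ be its winding number about the origin. The function $b := t^{-\kappa} a$ has winding number zero and lies in $G\cA$ by inverse-closedness; a standard Banach-algebra argument (available because $\cA$ contains all rational functions without poles on $\T$, so the winding number classifies components of $G\cA$) then yields $b = \exp c$ for some $c \in \cA$. Decomposing $c = c_- + c_+$ with $c_\pm \in \cA_\pm$ and exponentiating, scalar commutativity gives the right factorization $a = e^{c_-} \cdot t^{\kappa} \cdot e^{c_+}$, with $e^{\pm c_\pm} \in G(\cA_\pm)$.

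For the matrix case, I would introduce the Toeplitz operator $T(a) \colon \cA_+^N \to \cA_+^N$ defined by $T(a)\varphi := P(a\varphi)$ and exploit the identity
\[
T(a)T(a^{-1}) - I = P[a,P]\,a^{-1}\big|_{\cA_+^N},
\]
where $[a,P] := aP - PaI$. Since $aS - SaI = 2[a,P]$, hypothesis (b) makes $[a,P]$ and $[a^{-1},P]$ compact on $\cA$; hence $T(a)T(a^{-1}) - I$ and $T(a^{-1})T(a) - I$ are compact on $\cA_+^N$, and $T(a)$ is Fredholm. One then invokes the classical equivalence due to Gohberg-Krein and Budjanu-Gohberg (as presented in \cite[Chap.~II]{CG81} or \cite[Theorem~5.7']{LS87}), asserting that a symbol $a \in G\cA_{N\times N}$ admits a right Wiener-Hopf factorization $a = a_- D a_+$ in $\cA_{N\times N}$ precisely when $T(a)$ is Fredholm, with the partial indices $\kappa_i$ recovered from the kernel and cokernel dimensions of $T(t^j a)$ as $j$ ranges over $\Z$.

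The main obstacle I anticipate lies in the converse half of this equivalence, namely arranging that the factors produced from the Fredholm data actually lie in $\cA_\pm$ rather than in some larger auxiliary space such as $H^\infty$ or $L^2$. This is precisely where hypothesis (b) — compactness of $aS - SaI$ on $\cA$ itself, rather than merely on a Hilbert-space completion — is indispensable: it guarantees that a parametrix for $T(a)$ preserves $\cA_+^N$, so that the factors extracted from its range and kernel structure inherit the regularity of $\cA$, with inverse-closedness then upgrading them to elements of $G(\cA_\pm)_{N\times N}$.
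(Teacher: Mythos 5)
The paper does not actually prove Theorem~\ref{th:factorization}: it is quoted from Budjanu and Gohberg \cite{BG68} and the expositions \cite{CG81,LS87}, so there is no internal argument to compare yours with. Judged on its own, your outline gets the easy reductions right: the scalar case via winding number, logarithm and the decomposition $c=c_-+c_+$ is standard (although the reason the winding number classifies the components of $G\cA$ is inverse closedness --- the maximal ideal space of $\cA$ is $\T$, so an Arens--Royden type argument applies --- not the mere presence of the rational functions); and the computation showing that hypothesis (b), i.e.\ compactness of $aS-SaI=2(aP-PaI)$, makes $T(a)T(a^{-1})-I$ and $T(a^{-1})T(a)-I$ compact on $\cA_+^N$, hence $T(a)$ Fredholm there, is correct and is indeed how the commutator criterion is fed into the classical theory.

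The genuine gap is the step you relegate to a citation: the implication ``$T(a)$ Fredholm on $\cA_+^N$ $\Longrightarrow$ $a=a_-Da_+$ with $a_\pm^{\pm1}\in(\cA_\pm)_{N\times N}$''. That implication is the substantive content of the Gohberg--Krein/Budjanu--Gohberg theory, and the places you cite for it (\cite[Chap.~II]{CG81}, \cite[Theorem~5.7']{LS87}) are exactly the places the paper cites for Theorem~\ref{th:factorization} itself, so as written the argument is circular rather than a proof. Your closing heuristic does not repair this: a regularizer of $T(a)$ acts on $\cA_+^N$ automatically, because $\cA_+^N$ is the space on which $T(a)$ is Fredholm; the real difficulty is the construction of the factors $a_\pm$ and of the diagonal $D$ from the Fredholm data (kernels and cokernels of the operators $T(t^j a)$), and that construction is nontrivial precisely because one cannot fall back on approximation by rational matrix functions --- the rationals are assumed to lie in $\cA$ but need not be dense there (they are not dense in $\cH^\omega$, the case the paper needs). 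Note also that in the classical scheme hypothesis (b) is used only to obtain Fredholmness; the passage from Fredholmness to factorization with factors in $\cA_\pm$ is a general theorem about decomposing inverse-closed algebras containing the rationals and does not invoke (b), so your claim that (b) is what keeps the factors in $\cA_\pm$ misplaces where the hypothesis acts. To have a complete proof you must either carry out that construction or cite it as a separate, precisely identified general theorem --- otherwise one may as well cite \cite[Theorem~4.3]{BG68} outright, as the paper does.
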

Notice that (a) holds if and only if $\cA$ is a decomposing algebra.

The following theorem follows from a more general result due to Shubin
(see \cite[Theorem~6.15]{LS87}).
\begin{theorem}\label{th:stability}
Let $\cA$ be a decomposing algebra and let $\|\cdot\|$ be a norm in
the algebra $\cA_{N\times N}$. Suppose $a,d\in\cA_{N\times N}$ admit canonical
right and left Wiener-Hopf factorizations in the algebra $\cA_{N\times N}$. Then for
every $\eps>0$ there exists a $\delta>0$ such that if $\|a-d\|<\delta$, then for
every canonical right Wiener-Hopf factorization $a=a_-^{(r)}a_+^{(r)}$ and for every
canonical left Wiener-Hopf factorization $a=a_+^{(l)}a_-^{(l)}$ one can choose
a canonical right Wiener-Hopf factorization $d=d_-^{(r)}d_+^{(r)}$ and a canonical
left Wiener-Hopf factorization $d=d_+^{(l)}d_-^{(l)}$ such that
\[
\begin{array}{lll}
\|a_\pm^{(r)}-d_\pm^{(r)}\|<\eps,
&\quad&
\|[a_\pm^{(r)}]^{-1}-[d_\pm^{(r)}]^{-1}\|<\eps,
\\[3mm]
\|a_\pm^{(l)}-d_\pm^{(l)}\|<\eps,
&\quad&
\|[a_\pm^{(l)}]^{-1}-[d_\pm^{(l)}]^{-1}\|<\eps.
\end{array}
\]
\end{theorem}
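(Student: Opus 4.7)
The plan is to linearize around the given factorization of $a$ and recover the factorization of $d$ by a contraction argument on the perturbation. Fix a canonical right Wiener--Hopf factorization $a = a_-^{(r)} a_+^{(r)}$ with $a_\pm^{(r)}\in G(\cA_\pm)_{N\times N}$, and look for the corresponding factorization of $d$ in the form $d_-^{(r)} = a_-^{(r)}(I+x_-)$, $d_+^{(r)} = (I+x_+)a_+^{(r)}$ with $x_\pm\in(\cA_\pm)_{N\times N}$. The requirement $d_-^{(r)} d_+^{(r)} = d$ then becomes
\[
(I+x_-)(I+x_+) \;=\; (a_-^{(r)})^{-1} d\, (a_+^{(r)})^{-1} \;=:\; I+y,
\]
so one must solve $x_- + x_+ + x_- x_+ = y$ in $(\cA_-)_{N\times N}\oplus(\cA_+)_{N\times N}$. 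Here $\|y\|\le C\|d-a\|$ with $C$ depending only on $\|(a_-^{(r)})^{\pm 1}\|$ and $\|(a_+^{(r)})^{\pm 1}\|$, so $y$ is as small as needed once $\delta$ is.

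Next, the decomposing hypothesis on $\cA$ provides bounded projections $\Pi_\pm$ onto the two summands of a splitting $\cA=\cA_-^0\oplus\cA_+$, where $\cA_-^0$ is the subspace of those elements of $\cA_-$ whose zeroth Fourier coefficient vanishes; this normalization is exactly what singles out the canonical factorization. Extending $\Pi_\pm$ entrywise to $\cA_{N\times N}$, the equation decouples into the system
\[
x_- = \Pi_-(y - x_- x_+), \qquad x_+ = \Pi_+(y - x_- x_+),
\]
and the map $F(x_-,x_+) := (\Pi_-(y-x_-x_+),\Pi_+(y-x_-x_+))$ is a contraction on the closed ball of radius $2\|\Pi\|\,\|y\|$ provided $\|y\|$ is small enough. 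Banach's fixed-point theorem then produces a unique small solution $(x_-,x_+)$, which yields $d_\pm^{(r)}$ satisfying $\|d_\pm^{(r)}-a_\pm^{(r)}\|\le\mathrm{const}\cdot\|d-a\|$. For the inverses, $I+x_\pm$ is invertible by a Neumann series and
\[
(d_\pm^{(r)})^{-1} - (a_\pm^{(r)})^{-1} = -(a_\pm^{(r)})^{-1}x_\pm(I+x_\pm)^{-1}(a_\pm^{(r)})^{-1},
\]
which inherits the same bound.

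The canonical left Wiener--Hopf factorization $a = a_+^{(l)} a_-^{(l)}$ is treated by an identical scheme with the roles of $\cA_-$ and $\cA_+$ swapped (factor from the other side: $d_+^{(l)}=(I+y_+)a_+^{(l)}$, $d_-^{(l)}=a_-^{(l)}(I+y_-)$), and combining the two sets of estimates gives the conclusion. The main obstacle I foresee is purely bookkeeping: making the canonical factorization \emph{unique} requires pinning down a normalization, because the equation $x_-+x_+=y$ is solved only up to an additive constant matrix. One must decree, e.g., that $x_-$ has vanishing zeroth Fourier coefficient and absorb the constant freedom into $I+x_+$ (which amounts to fixing $d_+^{(r)}(0)$ relative to $a_+^{(r)}(0)$); only with this choice are $\Pi_\pm$ well defined and bounded, and only with it is the factorization of the original $a$ preserved when $d\to a$. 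Once this normalization has been fixed on both the given factorizations of $a$ and on the unknowns, the contraction argument and the Neumann-series estimate for the inverses are routine.
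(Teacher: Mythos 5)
The paper does not actually prove this theorem: it is quoted as a special case of a more general stability result of Shubin, with a bare reference to \cite[Theorem~6.15]{LS87}. Your proposal therefore takes a genuinely different route by supplying a self-contained argument, and it is the standard one: the ansatz $d_-^{(r)}=a_-^{(r)}(I+x_-)$, $d_+^{(r)}=(I+x_+)a_+^{(r)}$ reduces matters to $x_-+x_++x_-x_+=y$ with $\|y\|\le\|[a_-^{(r)}]^{-1}\|\,\|[a_+^{(r)}]^{-1}\|\,\|a-d\|$, and the splitting $\cA_{N\times N}=(\cA_-^0)_{N\times N}\oplus(\cA_+)_{N\times N}$ (where $\cA_-^0$ is the set of elements of $\cA_-$ with vanishing zeroth Fourier coefficient; this sum is direct because $\cA_-\cap\cA_+$ consists of the constants, and the projections are bounded by the closed graph theorem, both summands being closed while the decomposing hypothesis gives the algebraic splitting) lets Banach's fixed point theorem produce a small solution, hence a canonical factorization of $d$ with Lipschitz-type bounds $\|d_\pm^{(r)}-a_\pm^{(r)}\|\le\mathrm{const}\,\|a-d\|$, and similarly for the inverses; the left factorization is symmetric. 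Your closing worry about normalization is already settled by placing $x_-$ in $(\cA_-^0)_{N\times N}$: uniqueness of the factorization is never needed, only existence of a nearby one. (Also, your displayed inverse formula should read $[d_-^{(r)}]^{-1}-[a_-^{(r)}]^{-1}=-(I+x_-)^{-1}x_-[a_-^{(r)}]^{-1}$; this does not affect the estimate.)

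One substantive caveat concerns quantifiers. Your $\delta$ depends on the chosen factorization of $a$ through $\|[a_\pm^{(r)}]^{\pm1}\|$, whereas the statement promises a single $\delta$ serving every canonical factorization of $a$; these form the unbounded family $a=(a_-^{(r)}M)(M^{-1}a_+^{(r)})$ with $M\in G\C_{N\times N}$, over which your constants are not uniform. This is arguably a defect of the literal phrasing rather than of your argument (for ill-conditioned constant $M$ the uniform claim cannot hold), and the factorization-dependent version you prove is exactly what the paper needs, namely continuous dependence of the factors of $a-\lambda$ on $\lambda$ in the proof of Theorem~\ref{th:HO2}(c); still, you should state explicitly that your $\delta$ is allowed to depend on the fixed factorizations of $a$. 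What the paper's citation buys instead is Shubin's greater generality (stability of factorizations that need not be canonical), at the price of leaving the theorem a black box, while your approach is elementary and quantitative.
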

\subsection{Verification of the hypotheses of Theorem~\ref{th:factorization}
for generalized H\"older spaces}
\begin{theorem}\label{th:verification}
Let $\omega$ belong to the Bari-Stechkin class and let $\cH$ be either $\cH^\omega$
or $\cH_0^\omega$. Then
\begin{enumerate}
\item[{\rm (a)}] $a\in \cH$ is invertible in $\cH$ if and only if
$a(t)\ne 0$ for all $t\in\T$;
\item[{\rm (b)}] $S$ is bounded on $\cH$;
\item[{\rm (c)}] for $a\in\cH$, the operator $aS-SaI$ is compact on $\cH$.
\end{enumerate}
\end{theorem}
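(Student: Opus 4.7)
The three parts call for quite different arguments, and I would dispose of (a) and (b) by standard estimates before attacking (c), which is the substantive part. For (a), only the nontrivial direction matters. If $a\in\cH$ has no zero on $\T$, set $m:=\min_{t\in\T}|a(t)|>0$. The pointwise identity
\[
|1/a(t)-1/a(s)|=|a(t)-a(s)|/|a(t)a(s)|\leq m^{-2}|a(t)-a(s)|
\]
gives $\omega(1/a,x)\leq m^{-2}\omega(a,x)$, hence $|1/a|_\omega\leq m^{-2}|a|_\omega$, so $1/a\in\cH^\omega$; the same estimate forces $\omega(1/a,x)/\omega(x)\to 0$ when $a\in\cH_0^\omega$.

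For (b), I would start from the classical Privalov--Zygmund pointwise bound
\[
\omega(Sf,x)\leq C\left(\int_0^x\frac{\omega(f,y)}{y}\,dy+x\int_x^\pi\frac{\omega(f,y)}{y^2}\,dy\right),
\]
valid for continuous $f$ on $\T$. Substituting $\omega(f,y)\leq|f|_\omega\omega(y)$ and invoking the two integral conditions defining the Bari--Stechkin class immediately yields $\omega(Sf,x)\leq C'|f|_\omega\omega(x)$, so $S$ is bounded on $\cH^\omega$. For $\cH_0^\omega$ a density argument finishes the job: $S$ maps trigonometric polynomials to trigonometric polynomials (which lie in $\cH_0^\omega$), and trigonometric polynomials are norm-dense in $\cH_0^\omega$ by a Jackson-type theorem (to be established in Section~\ref{sec:approximation}), so boundedness on $\cH_0^\omega$ follows from boundedness on $\cH^\omega$ together with the closedness of $\cH_0^\omega$ in $\cH^\omega$.

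Part (c) is the heart of the theorem. The starting observation is that $S$ acts on Fourier monomials by $St^k=\mathrm{sgn}(k)\,t^k$ (with the convention $\mathrm{sgn}(0)=1$), so
\[
[S,t^nI]t^k=(\mathrm{sgn}(n+k)-\mathrm{sgn}(k))\,t^{n+k}
\]
is nonzero only when $k$ and $n+k$ have opposite signs; this forces $[S,t^nI]$ to have rank $|n|$, so $[S,pI]$ is finite-rank for every trigonometric polynomial $p$. For $\cH=\cH_0^\omega$ this closes the argument: picking polynomials $p_n\to a$ in $\cH_0^\omega$-norm (same Jackson-type approximation), the Banach-algebra estimate
\[
\|[S,aI]-[S,p_nI]\|_{\cH_0^\omega\to\cH_0^\omega}\leq 2\,\|S\|\,\|a-p_n\|_{\cH_0^\omega}\to 0
\]
exhibits $[S,aI]$ as an operator-norm limit of finite-rank operators, hence compact.

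The main obstacle is $\cH=\cH^\omega$, where trigonometric polynomials are no longer norm-dense. Here the limiting argument must be replaced by a direct precompactness verification. I would exploit the commutator-kernel representation
\[
([S,aI]f)(t)=\frac{1}{\pi i}\,\mathrm{v.p.}\!\int_\T\frac{a(\tau)-a(t)}{\tau-t}\,f(\tau)\,d\tau,
\]
whose kernel is dominated by $C|a|_\omega\,\omega(|\tau-t|)/|\tau-t|$, an integrable majorant by the first Bari--Stechkin condition. A near/far-diagonal splitting of the difference $[S,aI]f(t)-[S,aI]f(t')$ that exploits the cancellation $a(\tau)-a(t)$ in the numerator should produce an improved estimate $\omega([S,aI]f,x)\leq C\|a\|_{\cH^\omega}\|f\|_{\cH^\omega}\,\omega_*(x)$ for some Bari--Stechkin function $\omega_*$ with $\omega_*(x)/\omega(x)\to 0$ as $x\to 0$. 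The compact embedding $\cH^{\omega_*}\hookrightarrow\cH^\omega$, valid whenever $\omega_*/\omega\to 0$ at the origin (an Arzel\`a--Ascoli argument in $\cH^\omega$), then yields compactness of $[S,aI]$ on $\cH^\omega$. Producing the improved modulus $\omega_*$ for the commutator is the technically most delicate step.
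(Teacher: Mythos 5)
Your parts (a) and (b) for $\cH^\omega$ coincide with the paper's own argument (the elementary inverse estimate, and the Zygmund estimate combined with the two Bari--Stechkin integral conditions). Two smaller remarks. For $S$ on $\cH_0^\omega$ the paper instead invokes a lemma of Guseinov--Mukhtarov asserting that $\int_0^x\omega(f,y)y^{-1}dy$ and $x\int_x^\pi\omega(f,y)y^{-2}dy$ are $o(\omega(x))$ for $f\in\cH_0^\omega$; your density route is also viable, but the Jackson theorem you appeal to only gives sup-norm approximation, so the density of trigonometric polynomials in $\cH_0^\omega$ with respect to the $\cH^\omega$-norm needs a separate (standard, e.g.\ Fej\'er-mean) justification. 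Granting that, your finite-rank approximation of $aS-SaI$ on $\cH_0^\omega$ is a nice self-contained argument; the paper goes the other way, deducing the $\cH_0^\omega$ case of (c) by restricting the operator, already known to be compact on $\cH^\omega$, to the closed subspace $\cH_0^\omega$.

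The genuine gap is in part (c) for $\cH=\cH^\omega$, exactly the step you call the heart. The claimed improved estimate $\omega\bigl((aS-SaI)f,x\bigr)\le C\|a\|_{\cH^\omega}\|f\|_{\cH^\omega}\,\omega_*(x)$ with $\omega_*(x)/\omega(x)\to0$ is false. Take $f\equiv1$: since $S1=1$, one has $(aS-SaI)1=a-Sa=2Qa$, and choosing $a\in\cH^\omega$ anti-analytic with $\omega(a,x)\ge c\,\omega(x)$ (for $\omega(x)=x^\gamma$ take the lacunary series $\sum_{k\ge1}2^{-k\gamma}e^{-i2^k\theta}$, which lies in $C^\gamma$ and in no better H\"older class) gives $\omega\bigl((aS-SaI)1,x\bigr)\asymp\omega(x)$, not $o(\omega(x))$, while $\|f\|_{\cH^\omega}=1$. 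So the commutator cancellation produces no uniform gain in the modulus of continuity, and the route ``map into $\cH^{\omega_*}$, then use the compact embedding'' cannot work; note also that a compact operator on $\cH^\omega$ need not send the unit ball into any compactly embedded subspace, so the strategy is not forced by the conclusion. The paper does not prove this part from scratch either: it cites Tursunkulov (see also N.~Samko's survey) on compactness, in generalized H\"older spaces, of integral operators whose kernel has a generalized weak singularity dominated by $\omega(|\tau-t|)/|\tau-t|$; the standard proofs truncate or regularize the kernel $\bigl(a(\tau)-a(t)\bigr)/(\tau-t)$ near the diagonal, verify compactness of the truncated operators, and establish operator-norm convergence using the Bari--Stechkin conditions. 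If you want a self-contained argument for $\cH^\omega$, that is the route to take; as written, your sketch fails at its key estimate.
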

\begin{proof}
(a) Obviously, $G\cH\subset GC$. Conversely, if $f\in GC$, then for all
$x\in(0,\pi]$,
\[
\omega(1/f,x)\le\|1/f\|_C^2\,\omega(f,x).
\]
From this inequality we see that if $f\in GC\cap\cH$, then $1/f\in\cH$.
Part (a) is proved.

(b) For $\cH^\omega$ this result follows from the well known Zygmund estimate
(see \cite{Zygmund24} and also \cite[p.~492]{BS56}, \cite[p.~10]{GM80}):
\begin{equation}\label{eq:Zygmund-estimate}
\omega(Sf,x)\le
c\int_0^x\frac{\omega(f,y)}{y}\,dy+
cx\int_x^\pi\frac{\omega(f,y)}{y^2}\,dy,
\quad
0<x\le\pi,
\end{equation}
with a positive constant $c$ independent of $f\in\cH^\omega$. For a self-contained
proof of the boundedness of $S$ on $\cH^\omega$ (in a more general situation of
moduli of smoothness $\omega_\alpha(f,x)$ of order $\alpha>0$), see
S. Samko and A. Yakubov \cite[Theorem~2]{SYa85}.

If $f\in\cH_0^\omega$ and
\[
F_1(f,x):=\int_0^x\frac{\omega(f,y)}{y}\,dy,
\quad
F_2(f,x):=x\int_x^\pi\frac{\omega(f,y)}{y^2}\,dy,
\]
then, by \cite[Section~IV.4, Lemma~1]{GM80},
\begin{equation}\label{eq:verification}
\lim_{x\to 0+0}\frac{F_1(f,x)}{\omega(x)}
=
\lim_{x\to 0+0}\frac{F_2(f,x)}{\omega(x)}
=0.
\end{equation}
From \eqref{eq:Zygmund-estimate} and \eqref{eq:verification} it follows that
$Sf\in\cH_0^\omega$ whenever $f\in\cH_0^\omega$. That is, $S$ is bounded
on $\cH_0^\omega$, too. Part (b) is proved.

(c) For $a\in\cH^\omega$, the compactness of $aS-SaI$ on $\cH^\omega$ was
proved by Tursunkulov \cite{Tursunkulov82} (see also a survey by
N.~Samko \cite[Corollary~4.8]{Samko04}).

If $a\in\cH_0^\omega$, then $aS-SaI$ is bounded on $\cH_0^\omega$ by part (b)
and is compact on $\cH^\omega$ by what has just been said above. Since
$\cH_0^\omega\subset\cH^\omega$, it is easy to see that the operator $aS-SaI$
is also compact on $\cH_0^\omega$.
\end{proof}
\subsection{Wiener-Hopf factorization in generalized H\"older spaces}
\begin{theorem}
\label{th:factorization-Hoelder}
Let $\omega$ belong to the Bari-Stechkin class and let $\cH$ be either
$\cH^\omega$ or $\cH_0^\omega$. Suppose $a\in\cH_{N\times N}$.
\begin{enumerate}
\item[{\rm (a)}]
If $T(a)$ is invertible on $H_N^2$, then $a$ admits a canonical right Wiener-Hopf
factorization in $\cH_{N\times N}$.

\item[{\rm (b)}]
If $T(\widetilde{a})$ is invertible on $H_N^2$, then $a$ admits a canonical
left Wiener-Hopf factorization in $\cH_{N\times N}$.
\end{enumerate}
\end{theorem}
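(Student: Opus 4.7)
The plan is to derive part (a) as a direct application of Theorem~\ref{th:factorization} together with invertibility information, and then to deduce part (b) from part (a) via the symmetry $a\mapsto\widetilde{a}$.

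For part (a), I would first verify that $\cH$ satisfies the hypotheses of Theorem~\ref{th:factorization}. Theorem~\ref{th:verification} delivers almost everything: part (b) gives boundedness of $S$ on $\cH$ (so $\cH$ is decomposing), part (c) gives compactness of the commutator $aS-SaI$ on $\cH$ for each $a\in\cH$, and part (a) gives inverse closedness in $C$. The remaining structural assumptions---that $\cH$ is a Banach algebra continuously embedded in $C$ and contains every rational function without poles on $\T$---are immediate from the definitions. Next, since $T(a)$ is invertible it is in particular Fredholm, which by the standard necessary condition forces $a\in GL_{N\times N}^\infty$; combined with continuity ($\cH\subset C$), this gives $\det a(t)\ne 0$ for every $t\in\T$. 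Theorem~\ref{th:factorization} then yields a right Wiener-Hopf factorization $a=a_-Da_+$ in $\cH_{N\times N}$ with $D(t)=\mathrm{diag}(t^{\kappa_1},\dots,t^{\kappa_N})$.

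The heart of the argument is showing that all $\kappa_i=0$. Since $a_-\in G(\overline{H^\infty})_{N\times N}$ and $a_+\in GH_{N\times N}^\infty$, the elementary Toeplitz product identities yield $T(a)=T(a_-)T(D)T(a_+)$, with $T(a_\pm)$ invertible on $H_N^2$ (their inverses being $T(a_\pm^{-1})$). Hence invertibility of $T(a)$ is equivalent to invertibility of the block-diagonal operator $T(D)$; since $T(t^\kappa)$ is not invertible on $H^2$ for any $\kappa\ne 0$, each $\kappa_i$ must vanish, so the factorization is canonical.

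For part (b), I would observe that $\widetilde{a}\in\cH_{N\times N}$ whenever $a\in\cH_{N\times N}$, because the change of variable $e^{i\theta}\mapsto e^{-i\theta}$ preserves the modulus of continuity, so $|\widetilde{a}|_\omega=|a|_\omega$ and both $\cH^\omega$ and $\cH_0^\omega$ are preserved. Applying part (a) to $\widetilde{a}$ then produces a canonical right factorization $\widetilde{a}=\widetilde{a}_-\widetilde{a}_+$ in $\cH_{N\times N}$. The involution $t\mapsto 1/t$ swaps $\overline{H^\infty}$ with $H^\infty$ and preserves $\cH$, so $a(t)=\widetilde{a}(1/t)=[\widetilde{a}_-(1/t)]\,[\widetilde{a}_+(1/t)]$ is a canonical left Wiener-Hopf factorization of $a$ in $\cH_{N\times N}$.

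I do not anticipate a serious obstacle: the heavy analytic content (boundedness of $S$, compactness of commutators) is already isolated in Theorem~\ref{th:verification}, and Theorem~\ref{th:factorization} handles existence of the factorization. The remaining step---identifying canonicity with invertibility of $T(a)$ via the product formula---is classical, and the reduction of part (b) to part (a) is pure bookkeeping using properties of the operation $\widetilde{\phantom{a}}$.
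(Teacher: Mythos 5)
Your proposal is correct, but it deviates from the paper's argument at the two key points, and both deviations are legitimate. For the existence step you and the paper agree: Theorem~\ref{th:verification} feeds Theorem~\ref{th:factorization}, and invertibility of $T(a)$ gives $\det a(t)\ne 0$ on $\T$, so $a=a_-Da_+$ in $\cH_{N\times N}$. For canonicity, the paper instead invokes the $L_N^2$ theory: invertibility of $T(a)$ yields a \emph{canonical} right generalized factorization of $a$ in $L_N^2$ (Clancey--Gohberg), and since the $\cH$-factorization $a=u_-Du_+$ is also a generalized factorization in $L_N^2$, uniqueness of partial indices forces $D=1$. You argue via $T(a)=T(a_-)T(D)T(a_+)$ with $T(a_\pm)$ invertible, so $T(D)=\bigoplus_i T(t^{\kappa_i})$ must be invertible, forcing all $\kappa_i=0$; this is more elementary and self-contained (only the standard multiplication identities for Toeplitz operators with analytic/anti-analytic factors), whereas the paper's route leans on the heavier generalized-factorization machinery but transfers verbatim to settings where such product identities are less convenient. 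For part (b) the paper passes to $a^{-1}$ (using inverse closedness and the equivalence of invertibility of $T(\widetilde{a})$ and $T(a^{-1})$ from B\"ottcher--Silbermann) and inverts the factors of a canonical right factorization of $a^{-1}$; you instead apply part (a) to $\widetilde{a}$ and flip $t\mapsto 1/t$, which preserves $\cH^\omega$ and $\cH_0^\omega$ and swaps the $\pm$ subalgebras, again a correct and slightly more hands-on reduction. Two small points you wave at as ``immediate'': that $\cH$ contains the rational functions without poles on $\T$ requires the lower bound $\omega(x)\ge c\,x^\beta$ for some $\beta\in(0,1)$ (a consequence of the second Bari--Stechkin condition), not just the definitions; and the implication ``$T(a)$ invertible $\Rightarrow a\in GL^\infty_{N\times N}$'' for matrix symbols is a citable but nontrivial fact (the paper cites Clancey--Gohberg for $\det a\ne 0$). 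Neither is a gap, but both deserve a reference.
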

\begin{proof}
We follow the proof of \cite[Theorem~2.4]{K-IWOTA05}.

(a) If $T(a)$ is invertible on $H_N^2$, then $\det a(t)\ne 0$ for all $t\in\T$
(see, e.g., \cite[Chap.~VII, Proposition~2.1]{CG81}). Then, by
\cite[Chap.~VII, Theorem~3.2]{CG81}, the matrix function $a$ admits a canonical
right generalized factorization in $L_N^2$, that is, $a=a_-a_+$, where
$a_-^{\pm 1}\in (\overline{H^2})_{N\times N}$,
$a_+^{\pm 1}\in H^2_{N\times N}$ (and, moreover, the operator
$a_-Pa_-^{-1}I$ is bounded on $L_N^2$).

On the other hand, from Theorems~\ref{th:factorization} and \ref{th:verification}
it follows that $a\in\cH_{N\times N}$ admits a right Wiener-Hopf factorization
$a=u_-Du_+$ in $\cH_{N\times N}$. Then
\[
u_-^{\pm 1}\in (\cH_-)_{N\times N}\subset(\overline{H^2})_{N\times N},
\quad
u_+^{\pm 1}\in (\cH_+)_{N\times N}\subset H^2_{N\times N},
\]
that is, $a=u_-Du_+$ is a right generalized factorization in $L_N^2$.
By the uniqueness of the partial indices in a right generalized factorization
in $L_N^2$ (see, e.g., \cite[Corollary~2.1]{LS87}), $D=1$. Part (a) is proved.

(b) In view of Theorem~\ref{th:verification}(a), $a^{-1}\in\cH_{N\times N}$.
By \cite[Proposition~7.19(b)]{BS06}, the invertibility of $T(\widetilde{a})$ on
$H_N^2$ is equivalent to the invertibility of $T(a^{-1})$ on $H_N^2$. In view of
part (a), there exist $f_\pm\in G(\cH_\pm)_{N\times N}$ such that $a^{-1}=f_-f_+$.
Put $v_\pm:=f_\pm^{-1}$. Then $v_\pm\in G(\cH_\pm)_{N\times N}$ and
$a=v_+v_-$ is a canonical left Wiener-Hopf factorization in $\cH_{N\times N}$.
\end{proof}
\section{Some applications of approximation theory}
\label{sec:approximation}
\subsection{The best uniform approximation}
For $n\in\Z_+$, let $\cP^n$ be the set of all Laurent polynomials of the form
\[
p(t)=\sum_{j=-n}^n\alpha_j t^j,\quad\alpha_j\in\C,\quad t\in\T.
\]
By the Chebyshev theorem (see, e.g., \cite[Section~2.2.1]{Timan63}),
for $f\in C$ and $n\in\Z_+$, there is a Laurent polynomial $p_n(f)\in\cP^n$
such that
\begin{equation}\label{eq:approx1}
\|f-p_n(f)\|_C=\inf_{p\in\cP^n}\|f-p\|_C.
\end{equation}
Each such polynomial $p_n(f)$ is called a polynomial of best uniform
approximation.

By the Jackson theorem (see, e.g., \cite[Section~5.1.2]{Timan63}),
there exists a constant $A>0$ such that for all $f\in C$ and
all $n\in\Z_+$,
\begin{equation}\label{eq:approx2}
\inf_{p\in\cP^n}\|f-p\|_C \le A \omega\left(f,\frac{1}{n+1}\right).
\end{equation}
\subsection{Norms of truncations of Toeplitz and Hankel operators}
Let $X$ be a Banach space. For definiteness, let the norm of
$a=(a_{\alpha,\beta})_{\alpha,\beta=1}^N$ in $X_{N\times N}$ be given by
\[
\|a\|_{X_{N\times N}}:=N\max\limits_{1\le \alpha,\beta\le N}\|a_{\alpha,\beta}\|_X.
\]
We will simply write $\|a\|_\infty$, $\|a\|_C$, and $\|a\|_\omega$ instead of
$\|a\|_{L_{N\times N}^\infty}$, $\|a\|_{C_{N\times N}}$, and
$\|a\|_{(\cH^\omega)_{N\times N}}$, respectively.
Denote by $\|A\|$ the norm of a bounded linear operator $A$ on $H_N^2$.

Put $\Delta_0:=P_0$ and $\Delta_j:=P_j-P_{j-1}$ for $j\in\{0,\dots,n\}$.
\begin{lemma}\label{le:trunc}
Let $n\in\Z_+$. Suppose $v_\pm$ and $u_\pm$ satisfy \eqref{eq:factors1},
\eqref{eq:factors2}, and $u_\pm^{-1}\in C_{N\times N}$. Then there exists
a positive constant $A_N$ depending only on $N$ such that for all
$n\in\Z_+$ and all $j\in\{0,\dots,n\}$,
\begin{eqnarray}
\|Q_nT(b)\Delta_j\|
&\le&
A_N\|v_-\|_\infty\max_{1\le\alpha,\beta\le N}
\omega\left([u_+^{-1}]_{\alpha,\beta},\frac{1}{n-j+1}\right),
\label{eq:trunc-1}
\\
\|\Delta_jT(c)Q_n\|
&\le&
A_N\|v_+\|_\infty\max_{1\le\alpha,\beta\le N}
\omega\left([u_-^{-1}]_{\alpha,\beta},\frac{1}{n-j+1}\right),
\label{eq:trunc-2}
\\
\|Q_nH(b)\|
&\le&
A_N\|v_-\|_\infty\max_{1\le\alpha,\beta\le N}
\omega\left([u_+^{-1}]_{\alpha,\beta},\frac{1}{n+1}\right),
\label{eq:trunc-3}
\\
\|H(\widetilde{c})Q_n\|
&\le&
A_N\|v_+\|_\infty\max_{1\le\alpha,\beta\le N}
\omega\left([u_-^{-1}]_{\alpha,\beta},\frac{1}{n+1}\right).
\label{eq:trunc-4}
\end{eqnarray}
\end{lemma}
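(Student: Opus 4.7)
The strategy is to reduce each estimate to a norm bound of the form $\|(\text{bounded operator})\circ T(g)\|$ or $\|(\text{bounded operator})\circ H(g)\|$, where $g=u_\pm^{-1}-p$ is the error of a polynomial approximant, and then invoke the Chebyshev--Jackson inequality \eqref{eq:approx2} entrywise.

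First I would record two algebraic reductions. A direct inspection of the definitions shows $H(a)=0$ when $a\in(\overline{H^\infty})_{N\times N}$ and $H(\widetilde{a})=0$ when $a\in H^\infty_{N\times N}$; in particular $H(v_-)=0$ and $H(\widetilde{v_+})=0$. Combined with the standard identities $T(ab)=T(a)T(b)+H(a)H(\widetilde{b})$ and $H(ab)=T(a)H(b)+H(a)T(\widetilde{b})$, this gives
\[
T(b)=T(v_-)T(u_+^{-1}),
\quad
H(b)=T(v_-)H(u_+^{-1}),
\quad
T(c)=T(u_-^{-1})T(v_+),
\quad
H(\widetilde{c})=H(\widetilde{u_-^{-1}})T(v_+).
\]
Inspecting the action on a monomial $t^k e_\alpha$ also shows that $T(v_-)$ preserves every $P_m H_N^2$, so $Q_nT(v_-)=Q_nT(v_-)Q_n$, and that $T(v_+)$ preserves every $Q_m H_N^2$, so $T(v_+)Q_n=Q_nT(v_+)Q_n$.

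Next, I would insert polynomial approximants of $u_\pm^{-1}$. For each $n\in\Z_+$ and each $0\le j\le n$, choose $p\in(\cP^{n-j})_{N\times N}$ (used in \eqref{eq:trunc-1}--\eqref{eq:trunc-2}) and $p'\in(\cP^n)_{N\times N}$ (used in \eqref{eq:trunc-3}--\eqref{eq:trunc-4}) whose entries are polynomials of best uniform approximation of the corresponding entries of $u_+^{-1}$ or $u_-^{-1}$. The Jackson inequality \eqref{eq:approx2}, applied entrywise and combined with the matrix norm convention, produces an approximation bound of the form
\[
\|u_\pm^{-1}-p\|_\infty\le A_N\max_{1\le\alpha,\beta\le N}\omega\bigl([u_\pm^{-1}]_{\alpha,\beta},\tfrac{1}{n-j+1}\bigr),
\]
with the analogous bound for $p'$ (replacing $n-j+1$ by $n+1$); here $A_N$ depends only on $N$. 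A direct monomial computation then yields the annihilation identities
\[
Q_n T(v_-)T(p)\Delta_j=0,
\quad
\Delta_j T(p)T(v_+)Q_n=0,
\quad
Q_n H(p')=0,
\quad
H(\widetilde{p'})Q_n=0,
\]
each verified by tracking the exponents that survive after the successive Toeplitz/Hankel projections.

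With these pieces in hand, each norm estimate follows by subtracting the polynomial part. For instance, the first annihilation identity gives
\[
Q_n T(b)\Delta_j=Q_n T(v_-)\,T(u_+^{-1}-p)\,\Delta_j,
\]
and $\|T(a)\|\le\|a\|_\infty$, $\|Q_n\|\le 1$, together with the Jackson bound, deliver \eqref{eq:trunc-1}. The other three estimates are handled identically starting from the decompositions $\Delta_j T(c)Q_n=\Delta_j T(u_-^{-1}-p)T(v_+)Q_n$, $Q_n H(b)=Q_n T(v_-)Q_n H(u_+^{-1}-p')$, and $H(\widetilde{c})Q_n=H(\widetilde{u_-^{-1}-p'})T(v_+)Q_n$, using also the isometry $\|\widetilde{f}\|_\infty=\|f\|_\infty$. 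The only delicate point I anticipate is calibrating the approximation bidegree ($n-j$ for the Toeplitz truncations, $n$ for the Hankel truncations) so that the polynomial piece lies exactly in the kernel of the flanking projection; everything else is bookkeeping with the Jackson estimate.
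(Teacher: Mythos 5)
Your proposal is correct and follows essentially the same route as the paper's proof: the same factorizations $T(b)=T(v_-)T(u_+^{-1})$, $H(b)=T(v_-)H(u_+^{-1})$, $T(c)=T(u_-^{-1})T(v_+)$, $H(\widetilde{c})=H(\widetilde{u_-^{-1}})T(v_+)$ (you derive them from the product formulas where the paper cites them), the same observations $Q_nT(v_-)P_n=0$, $P_nT(v_+)Q_n=0$, the same best-approximation polynomials of degrees $n-j$ and $n$ with the same annihilation identities, and the same appeal to the Chebyshev--Jackson estimate \eqref{eq:approx2}. No gaps.
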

\begin{proof}
The idea of the proof is borrowed from \cite[Theorem~10.35(ii)]{BS06}
(see also \cite[Proposition~3.2]{K-IWOTA05}). Since $u_+^{-1},v_+\in H_{N\times N}^\infty$
and $u_-^{-1},v_-\in (\overline{H^\infty})_{N\times N}$, by \cite[Proposition~2.14]{BS06},
\begin{eqnarray*}
T(b)=T(v_-)T(u_+^{-1}),
&\quad&
T(c)=T(u_-^{-1})T(v_+),
\\
H(b)=T(v_-)H(u_+^{-1}),
&\quad&
H(\widetilde{c})=H(\widetilde{u_-^{-1}})T(v_+).
\end{eqnarray*}
It is easy to see that $Q_nT(v_-)P_n=0$ and $P_nT(v_+)Q_n=0$. Hence
\begin{eqnarray}
Q_nT(b)\Delta_j
&=&
Q_nT(v_-)Q_nT(u_+^{-1})\Delta_j,
\label{eq:trunc-5}
\\
\Delta_j T(c)Q_n
&=&
\Delta_j T(u_-^{-1})Q_n T(v_+)Q_n,
\label{eq:trunc-6}
\\
Q_nH(b)
&=&
Q_nT(v_-)Q_nH(u_+^{-1}),
\label{eq:trunc-7}
\\
H(\widetilde{c})Q_n
&=&
H(\widetilde{u_-^{-1}})Q_nT(v_+)Q_n.
\label{eq:trunc-8}
\end{eqnarray}
Let $p_{n-j}(u_+^{-1})$ and $p_{n-j}(u_-^{-1})$ be the polynomials in
$\cP_{N\times N}^{n-j}$ of best uniform approximation of $u_+^{-1}$ and
$u_-^{-1}$, respectively, where $j\in\{0,\dots,n\}$. Simple computations
show that
\begin{equation}\label{eq:trunc-9}
Q_nT[p_{n-j}(u_+^{-1})]\Delta_j=0,
\quad
\Delta_jT[p_{n-j}(u_-^{-1})]Q_n=0
\end{equation}
for all $j\in\{0,\dots,n\}$ and
\begin{equation}\label{eq:trunc-10}
Q_nH[p_n(u_+^{-1})]=0,
\quad
H[(p_n(u_-^{-1}))\widetilde{\hspace{2mm}}]Q_n=0.
\end{equation}
From \eqref{eq:trunc-5} and \eqref{eq:trunc-9} we get
\begin{eqnarray*}
\|Q_nT(b)\Delta_j\|
&\le&
\|Q_nT(v_-)Q_n\|\,
\|Q_nT[u_+^{-1}-p_{n-j}(u_+^{-1})]\Delta_j\|
\\
&\le&
\mathrm{const}\,\|v_-\|_\infty\, \|u_+^{-1}-p_{n-j}(u_+^{-1})\|_C.
\end{eqnarray*}
Combining this inequality with \eqref{eq:approx1}--\eqref{eq:approx2}, we
arrive at \eqref{eq:trunc-1}. Inequalities \eqref{eq:trunc-2}--\eqref{eq:trunc-4}
can be obtained in the same way by combining \eqref{eq:trunc-9}--\eqref{eq:trunc-10}
and representations \eqref{eq:trunc-6}--\eqref{eq:trunc-8}, respectively.
\end{proof}
\subsection{The asymptotic of the trace of $F_{n,m-1}(b,c)$}
\begin{lemma}\label{le:traceF}
Let $\omega,\psi$ belong to the Bari-Stechkin class. Suppose $v_\pm$ and $u_\pm$
satisfy \eqref{eq:factors1} and \eqref{eq:factors3}. If $m\in\N\setminus\{1\}$
and \eqref{eq:HO1-4} is fulfilled, then $\tr F_{n,m-1}(b,c)\to 0$ as $n\to\infty$.
\end{lemma}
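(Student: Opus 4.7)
The plan is to estimate $|\tr F_{n,m-1}(b,c)|$ by decomposing along the spectral projections $\Delta_j = P_j - P_{j-1}$ and invoking Lemma~\ref{le:trunc}, then recognizing the resulting expression as the quantity controlled by hypothesis~\eqref{eq:HO1-4}.

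First, I would write
\[
F_{n,m-1}(b,c) = \sum_{j=0}^n \Delta_j\, T(c) Q_n \big(Q_n H(b) H(\widetilde{c}) Q_n\big)^{m-1} Q_n T(b)\, \Delta_j,
\]
so that
\[
\tr F_{n,m-1}(b,c) = \sum_{j=0}^n \tr\!\Big(\Delta_j T(c) Q_n \big(Q_n H(b) H(\widetilde{c}) Q_n\big)^{m-1} Q_n T(b) \Delta_j\Big).
\]
Each summand is an operator of rank at most $N$ (since $\Delta_j$ has rank $N$), so its absolute trace is bounded by $N$ times its operator norm. Splitting the operator norm into a product of four factors gives
\[
|\tr F_{n,m-1}(b,c)| \le N \sum_{j=0}^n \|\Delta_j T(c) Q_n\|\, \big(\|Q_n H(b)\|\,\|H(\widetilde{c}) Q_n\|\big)^{m-1} \|Q_n T(b) \Delta_j\|.
\]

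Next I would apply Lemma~\ref{le:trunc} (whose hypotheses are fulfilled because \eqref{eq:factors3} implies $u_\pm^{\pm 1}\in\cH_{N\times N}\subset C_{N\times N}$) to bound each of the four norms. Since $u_+^{-1}\in(\cH^\psi)_{N\times N}$ and $u_-^{-1}\in(\cH^\omega)_{N\times N}$, the matrix moduli of continuity satisfy
\[
\omega([u_+^{-1}]_{\alpha,\beta},x)\le \|u_+^{-1}\|_{\cH^\psi}\,\psi(x), \qquad \omega([u_-^{-1}]_{\alpha,\beta},x)\le \|u_-^{-1}\|_{\cH^\omega}\,\omega(x).
\]
Combined with \eqref{eq:trunc-1}--\eqref{eq:trunc-4}, this yields a constant $C=C(u_\pm,v_\pm,N)>0$ such that
\[
\|\Delta_j T(c) Q_n\|\le C\omega\!\left(\tfrac{1}{n-j+1}\right),\quad \|Q_n T(b)\Delta_j\|\le C\psi\!\left(\tfrac{1}{n-j+1}\right),
\]
\[
\|Q_n H(b)\|\le C\psi\!\left(\tfrac{1}{n+1}\right),\qquad \|H(\widetilde{c})Q_n\|\le C\omega\!\left(\tfrac{1}{n+1}\right).
\]
Substituting these bounds and changing the summation index to $k=n-j+1$ produces
\[
|\tr F_{n,m-1}(b,c)|\le C' \left[\omega\!\left(\tfrac{1}{n+1}\right)\psi\!\left(\tfrac{1}{n+1}\right)\right]^{m-1} \sum_{k=1}^{n+1} \omega\!\left(\tfrac{1}{k}\right)\psi\!\left(\tfrac{1}{k}\right).
\]
Since $\omega$ and $\psi$ are almost increasing, the prefactor at $1/(n+1)$ is comparable to the one at $1/n$, and the sum up to $n+1$ differs from the sum up to $n$ only by a bounded factor; hence the whole right-hand side is $O$ of the quantity in \eqref{eq:HO1-4} and tends to $0$ by hypothesis.

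The argument is essentially a bookkeeping exercise once the right decomposition is written down, so there is no deep obstacle. The only point requiring care is making the indices match exactly: the exponent $m-1$ must produce $m-1$ copies of each Hankel factor evaluated at $1/(n+1)$, while the two ``end'' truncations at $\Delta_j$ must contribute the moving moduli at $1/(n-j+1)$ whose sum matches precisely the sum appearing in \eqref{eq:HO1-4}.
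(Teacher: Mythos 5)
Your argument is correct and is essentially the paper's own proof: decompose the trace over the blocks $\Delta_j$, bound each diagonal block's trace by a constant (depending only on $N$) times its operator norm, apply Lemma~\ref{le:trunc} together with the seminorm estimates $\omega(f,x)\le|f|_\omega\,\omega(x)$, and reindex to obtain exactly the quantity appearing in \eqref{eq:HO1-4}. The only slip is that your first display is false as an operator identity (the off-diagonal terms $\Delta_j T(c)Q_n\big(Q_nH(b)H(\widetilde{c})Q_n\big)^{m-1}Q_nT(b)\Delta_{j'}$ with $j\ne j'$ are missing); what is true, and all you actually use, is the trace identity $\tr F_{n,m-1}(b,c)=\sum_{j=0}^n\tr\big(\Delta_j F_{n,m-1}(b,c)\Delta_j\big)$, which is precisely the paper's starting point.
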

\begin{proof}
Since $\Delta_j F_{n,m-1}(b,c)\Delta_j$ is an $N\times N$ matrix for each
$n\in\Z_+$ and each $j\in\{0,\dots,n\}$, we have
\[
|\tr \Delta_j F_{n,m-1}(b,c)\Delta_j|\le C_N\|\Delta_j F_{n,m-1}(b,c)\Delta_j\|,
\]
where $C_N$ is a positive constant depending only on $N$. Hence
\begin{equation}\label{eq:traceF-1}
|\tr F_{n,m-1}(b,c)|=
\left|\tr \sum_{j=0}^n\Delta_j F_{n,m-1}(b,c)\Delta_j\right|
\le
C_N\sum_{j=0}^n\|\Delta_j F_{n,m-1}(b,c)\Delta_j\|.
\end{equation}
Taking into account that $\Delta_j P_n=P_n\Delta_j=\Delta_j$ for $j\in\{0,\dots,n\}$,
we obtain
\begin{eqnarray}\label{eq:traceF-2}
&&
\|\Delta_j F_{n,m-1}(b,c)\Delta_j\|
\le
\|\Delta_j T(c)Q_n\|\,
(\|Q_nH(b)\|\,
\|H(\widetilde{c})Q_n\|)^{m-1}
\|Q_n T(b)\Delta_j\|.
\end{eqnarray}
From Lemma~\ref{le:trunc} and the definition of the semi-norms $|\cdot|_\omega$
and $|\cdot|_\psi$ it follows that for all $n\in\Z_+$ and all $j\in\{0,\dots,n\}$,
\begin{eqnarray}
\|Q_nT(b)\Delta_j\|
&\le&
A_N\|v_-\|_\infty\left(
\max_{1\le\alpha,\beta\le N}|[u_+^{-1}]_{\alpha,\beta}|_\psi
\right)
\psi\left(\frac{1}{n-j+1}\right),
\label{eq:traceF-3}
\\
\|\Delta_jT(c)Q_n\|
&\le&
A_N\|v_+\|_\infty\left(
\max_{1\le\alpha,\beta\le N}|[u_-^{-1}]_{\alpha,\beta}|_\omega
\right)
\omega\left(\frac{1}{n-j+1}\right),
\label{eq:traceF-4}
\\
\|Q_nH(b)\|
&\le&
A_N\|v_-\|_\infty\left(
\max_{1\le\alpha,\beta\le N}|[u_+^{-1}]_{\alpha,\beta}|_\psi
\right)
\psi\left(\frac{1}{n+1}\right),
\label{eq:traceF-5}
\\
\|H(\widetilde{c})Q_n\|
&\le&
A_N\|v_+\|_\infty\left(
\max_{1\le\alpha,\beta\le N}|[u_-^{-1}]_{\alpha,\beta}|_\omega
\right)
\omega\left(\frac{1}{n+1}\right).
\label{eq:traceF-6}
\end{eqnarray}
Combining \eqref{eq:traceF-1}--\eqref{eq:traceF-6}, we get
\[
|\tr F_{n,m-1}(b,c)|
=
O\Bigg(
\left[\omega\left(\frac{1}{n+1}\right)\psi\left(\frac{1}{n+1}\right)\right]^{m-1}
\sum_{j=1}^{n+1}
\omega\left(\frac{1}{j}\right)\psi\left(\frac{1}{j}\right)
\Bigg)
\]
as $n\to\infty$. This implies that if \eqref{eq:HO1-4} holds, then
$\tr F_{n,m-1}(b,c)\to 0$ as $n\to\infty$.
\end{proof}
\subsection{Products of Hankel operators in Schatten-von Neumann classes}
\begin{lemma}\label{le:product}
Let $1\le p<\infty$, let $\omega,\psi$ belong to the Bari-Stechkin class, and let
\[
\sum_{k=1}^\infty
\left[\omega\left(\frac{1}{k}\right)\psi\left(\frac{1}{k}\right)\right]^p
<\infty.
\]

\begin{enumerate}
\item[(a)]
Suppose $a\in L_{N\times N}^\infty$ admits a factorization $a=u_-u_+$ with
$u_\pm$ satisfying \eqref{eq:factors3}. Then $H(a)H(\widetilde{a}^{-1})\in\cC_p(H_N^2)$.

\item[(b)]
Suppose $v_\pm$ and $u_\pm$ satisfy \eqref{eq:factors1} and \eqref{eq:factors3}.
Then $H(\widetilde{c})H(b)$ and $H(b)H(\widetilde{c})$ belong to $\cC_p(H_N^2)$.
\end{enumerate}
\end{lemma}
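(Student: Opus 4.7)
My plan is to estimate the $s$-numbers of the relevant Hankel products directly via rank-$\le(n+1)N$ approximants obtained by inserting the projection $P_n$ between the two Hankel factors. The remainder then factors through $Q_n$, so its operator norm is bounded by the product of two small terms, one controlled by $\psi$ and the other by $\omega$ through Jackson-type estimates.

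For part (b), set $T_1 := H(\widetilde{c})H(b)$ and $F_n := H(\widetilde{c})P_n H(b)$. Since the range of $F_n$ sits inside $H(\widetilde{c})P_n H_N^2$, which has dimension at most $(n+1)N$, we have $\mathrm{rank}\,F_n \le (n+1)N$, and
\[
T_1 - F_n = H(\widetilde{c})Q_n H(b) = \bigl[H(\widetilde{c})Q_n\bigr]\bigl[Q_n H(b)\bigr].
\]
Lemma~\ref{le:trunc}, parts (3) and (4), together with $u_+^{-1}\in(\cH^\psi)_{N\times N}$ and $u_-^{-1}\in(\cH^\omega)_{N\times N}$, yield
\[
s_{(n+1)N}(T_1)\le\|T_1-F_n\|\le C\,\omega\!\left(\tfrac{1}{n+1}\right)\psi\!\left(\tfrac{1}{n+1}\right).
\]
Monotonicity of the $s$-numbers and the summability hypothesis then give $T_1\in\cC_p(H_N^2)$. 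The operator $T_2 := H(b)H(\widetilde{c})$ is handled identically with $F_n := H(b)P_n H(\widetilde{c})$, provided one has the ``other-side'' estimates $\|H(b)Q_n\|\le C\psi(1/(n+1))$ and $\|Q_n H(\widetilde{c})\|\le C\omega(1/(n+1))$. These follow by mimicking the proof of Lemma~\ref{le:trunc}: combine the factorizations $H(b)=T(v_-)H(u_+^{-1})$ and $H(\widetilde{c})=H(\widetilde{u_-^{-1}})T(v_+)$ with the algebraic identities $H(p)Q_n=Q_n H(p)=0$ valid for every $p\in\cP_{N\times N}^n$ (immediate from $(H(p))_{j,k}=p_{j+k+1}$), and apply Jackson's inequality~\eqref{eq:approx2}.

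For part (a), my plan is to reduce to the bound from part (b) via the algebraic identity
\[
H(a)H(\widetilde{a^{-1}}) = -T(a)\cdot H(u_+^{-1})H(\widetilde{u_-^{-1}}).
\]
This identity follows by expanding $T(a)T(a^{-1})$ with the rule $T(fg)=T(f)T(g)+H(f)H(\widetilde{g})$, using $T(u_\pm)T(u_\pm^{-1})=I$ and $T(u_-)T(u_+)=T(a)$ (all three of which are consequences of the vanishing Hankels $H(u_-)=0$ and $H(\widetilde{u_+^{-1}})=0$ that come from $u_-\in(\overline{H^\infty})_{N\times N}$ and $u_+^{-1}\in H_{N\times N}^\infty$), and then invoking $H(a)H(\widetilde{a^{-1}})=I-T(a)T(a^{-1})$. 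The right-hand factor $H(u_+^{-1})H(\widetilde{u_-^{-1}})$ is precisely the product $T_2$ from part~(b) with the trivial choice $v_\pm=I$, so it belongs to $\cC_p(H_N^2)$ by what has been shown, and the two-sided ideal property of $\cC_p$ finishes the proof. The main technical obstacle is the derivation of the other-side truncation estimates not covered directly by Lemma~\ref{le:trunc}; once these are in hand, the remainder of the argument reduces to bookkeeping on $s$-number sums.
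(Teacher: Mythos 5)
Your proof is correct, but it runs along a genuinely different track than the paper's. The paper proves part (a) directly: it writes $H(a)=T(u_-)H(u_+)$ and $H(\widetilde{a}^{-1})=T(\widetilde{u_+^{-1}})H(u_-^{-1})$, estimates the singular values of each Hankel factor separately via best uniform approximation and Jackson's theorem, obtaining $s_n(H(a))=O(\psi(1/n))$ and $s_n(H(\widetilde{a}^{-1}))=O(\omega(1/n))$, and then passes to the product by Horn's theorem, $\sum_k s_k^p(AB)\le\sum_k s_k^p(A)s_k^p(B)$; part (b) is dispatched with "proved similarly." You instead estimate the singular values of the product itself: inserting $P_n$ between the Hankel factors gives a rank-$(n+1)N$ approximant, and the remainder $H(\widetilde{c})Q_n\cdot Q_nH(b)$ is controlled in operator norm by Lemma~\ref{le:trunc} (plus the mirror estimates $\|H(b)Q_n\|$, $\|Q_nH(\widetilde{c})\|$, which do follow exactly as you say from $H(p)Q_n=Q_nH(p)=0$ for $p\in\cP^n_{N\times N}$ and Jackson), so that $s_{(n+1)N}$ of the product is $O(\omega(1/(n+1))\psi(1/(n+1)))$ and no result on singular values of products is needed. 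You then reverse the logical order, deducing (a) from (b) through the identity $H(a)H(\widetilde{a^{-1}})=-T(a)H(u_+^{-1})H(\widetilde{u_-^{-1}})$, which I checked is a correct consequence of $T(fg)=T(f)T(g)+H(f)H(\widetilde{g})$ and the vanishing of $H(u_-)$ and $H(\widetilde{u_+^{-1}})$, together with the two-sided ideal property of $\cC_p$. The trade-off: your route is more elementary (no Horn's theorem, maximal reuse of Lemma~\ref{le:trunc}) and gives a clean algebraic reduction of (a) to (b); the paper's route yields the individual decay rates $s_n(H(a))=O(\psi(1/n))$, $s_n(H(\widetilde{a}))=O(\omega(1/n))$ for the Hankel operators themselves, which the paper reuses verbatim in the proof of Theorem~\ref{th:HO2}(a) to place $\cH_{N\times N}$ inside the Krein algebra, so if you adopted your proof wholesale you would still need that separate-factor estimate later.
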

\begin{proof}
This statement is proved by analogy with \cite[Lemma~10.36]{BS06}. Let us prove
only part (a). By \cite[Proposition~2.14]{BS06}, $H(a)=T(u_-)H(u_+)$ and
$H(\widetilde{a}^{-1})=T(\widetilde{u_+^{-1}})H(u_-^{-1})$. For $n\in\Z_+$,
let $p_n(u_+)$ and $p_n(u_-^{-1})$ be the polynomials in $\cP_{N\times N}^n$
of best uniform approximation of $u_+$ and $u_-^{-1}$, respectively.
Observe that
\[
\dim\im (T(u_-)H[p_n(u_+)])\le n+1,
\quad
\dim\im (T(\widetilde{u_+^{-1}})H[p_n(u_-^{-1})])\le n+1,
\]
whence
\begin{equation}\label{eq:product-2}
s_{n+1}(H(a))
\le
\|T(u_-)H(u_+)-T(u_-)H[p_n(u_+)]\|
\le
O(\|u_+-p_n(u_+)\|_C)
\end{equation}
and similarly
\begin{equation}\label{eq:product-3}
s_{n+1}(H(\widetilde{a}^{-1}))
\le
O(
\|u_-^{-1}-p_n(u_-^{-1})\|_C).
\end{equation}
From \eqref{eq:approx1}, \eqref{eq:approx2}, and the definition of the seminorms
$|\cdot|_\omega$ and $|\cdot|_\psi$ it follows that
\begin{eqnarray}
\|u_+-p_n(u_+)\|_C
&\le&
A_N\left(\max_{1\le\alpha,\beta\le N} |[u_+]_{\alpha,\beta}|_\psi\right)
\psi\left(\frac{1}{n+1}\right),
\label{eq:product-4}
\\
\|u_-^{-1}-p_n(u_-^{-1})\|_C
&\le&
A_N\left(\max_{1\le\alpha,\beta\le N} |[u_-^{-1}]_{\alpha,\beta}|_\omega\right)
\omega\left(\frac{1}{n+1}\right),
\label{eq:product-5}
\end{eqnarray}
where $A_N$ is a positive constant depending only on $\omega,\psi,N$.
Combining \eqref{eq:product-2}--\eqref{eq:product-5}, we get
\begin{equation}\label{eq:product-6}
s_n(H(a))=O\big(\psi(1/n)\big),
\quad
s_n(H(\widetilde{a}^{-1}))=O\big(\omega(1/n)\big)
\quad
(n\in\N).
\end{equation}
From \eqref{eq:product-6} and Horn's theorem (see, e.g. \cite[Chap.~II, Theorem~4.2]{GK69})
it follows that
\[
\sum_{k=1}^\infty s_k^p\big(H(a)H(\widetilde{a}^{-1})\big)
\le
\sum_{k=1}^\infty s_k^p\big(H(a)\big) s_k^p\big(H(\widetilde{a}^{-1})\big)
=
O\left(\sum_{k=1}^\infty
\left[\omega\left(\frac{1}{k}\right)\psi\left(\frac{1}{k}\right)\right]^p\right),
\]
which finishes the proof of part (a). Part (b) is proved similarly.
\end{proof}
\section{Proofs of the main results}
\label{sec:proofs}
\subsection{Decomposition of the trace of the logarithm of one matrix series}
\begin{lemma}\label{le:decomposition}
Let $\omega,\psi$ belong to the Bari-Stechkin class and let $\Sigma$ be a compact
set in the complex plane. Suppose
\[
\begin{array}{ll}
v_-:\Sigma\to (\overline{H^\infty})_{N\times N}, &
v_+:\Sigma\to H_{N\times N}^\infty,
\\[3mm]
u_-^{\pm 1}:\Sigma\to(\cH^\omega\cap \overline{H^\infty})_{N\times N}, &
u_+^{\pm 1}:\Sigma\to(\cH^\psi\cap H^\infty)_{N\times N}
\end{array}
\]
are continuous functions. If $m\in\N$, then there exist a constant $C_m\in(0,\infty)$
and a number $n_0\in\N$ such that for all $\lambda\in\Sigma$ and all $n\ge n_0$,
\[
\tr\!\log\left\{I-\sum_{k=0}^\infty G_{n,k}\big(b(\lambda),c(\lambda)\big)\right\}
+\tr\!\left[\sum_{j=1}^{m-1}\frac{1}{j}\left(
\sum_{k=0}^{m-j-1}G_{n,k}\big(b(\lambda),c(\lambda)\big)
\right)^j\right]
=
s_n(\lambda)
\]
and
\begin{eqnarray*}
|s_n(\lambda)|
&\le&
C_m\big(\|v_-(\lambda)\|_\infty\|v_+(\lambda)\|_\infty\big)^m
\\
&&\times
\left[
\max_{1\le\alpha,\beta\le N}
\omega\left([u_-^{-1}(\lambda)]_{\alpha,\beta},\frac{1}{n+1}\right)
\right]^m
\\
&&\times
\left[
\max_{1\le\alpha,\beta\le N}
\omega\left([u_+^{-1}(\lambda)]_{\alpha,\beta},\frac{1}{n+1}\right)
\right]^m.
\end{eqnarray*}
\end{lemma}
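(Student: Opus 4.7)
The plan is to expand $\tr\log(I-X_n(\lambda))$ as a power series in $X_n(\lambda):=\sum_{k=0}^\infty G_{n,k}\!\big(b(\lambda),c(\lambda)\big)$, and to absorb both the tail $j\ge m$ and the discrepancy between the full sum $X_n$ and the partial sums $Y_{n,j}:=\sum_{k=0}^{m-j-1}G_{n,k}$ into the error term $s_n$. The starting point is the factorization
\[
G_{n,k}=\big(P_0T(c)Q_n\big)\big(Q_nH(b)H(\widetilde{c})Q_n\big)^k\big(Q_nT(b)P_0\big),
\]
to which I would apply Lemma~\ref{le:trunc} with $j=0$ (so that $\Delta_0=P_0$). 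Combining \eqref{eq:trunc-1}--\eqref{eq:trunc-4} yields
\[
\|G_{n,k}(\lambda)\|\le K\big(\|v_-(\lambda)\|_\infty\|v_+(\lambda)\|_\infty\,\alpha_n(\lambda)\beta_n(\lambda)\big)^{k+1}
\]
for a constant $K=K(N)$, where $\alpha_n(\lambda)$ and $\beta_n(\lambda)$ denote respectively the $u_-^{-1}$- and $u_+^{-1}$-maxima appearing in the statement. Continuity of the data and compactness of $\Sigma$ then give a uniform bound $\|v_\pm(\lambda)\|_\infty\le M$ and $\alpha_n(\lambda)\beta_n(\lambda)\to 0$ uniformly in $\lambda$, so $n_0$ may be chosen once and for all so that $KM^2\alpha_n(\lambda)\beta_n(\lambda)\le 1/2$ for all $n\ge n_0$ and all $\lambda\in\Sigma$.

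For such $n$, the series defining $X_n$ converges absolutely in operator norm, with image contained in $P_0H_N^2\cong\C^N$; thus $X_n$, $Y_{n,j}$, and their powers may be treated as $N\times N$ matrices for trace purposes, and $|\tr M|\le N\|M\|$. Writing $\tr\log(I-X_n)=-\sum_{j\ge1}\tr(X_n^j)/j$ and rearranging, the lemma's definition of $s_n$ becomes
\[
s_n(\lambda)=-\sum_{j=1}^{m-1}\frac{1}{j}\tr\!\big(X_n^j-Y_{n,j}^j\big)-\sum_{j=m}^\infty\frac{1}{j}\tr(X_n^j).
\]
The tail is bounded geometrically by $N\sum_{j\ge m}(2KM^2\alpha_n\beta_n)^j/j=O\big((M^2\alpha_n\beta_n)^m\big)$. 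For each difference $\tr(X_n^j-Y_{n,j}^j)$ I would apply the telescoping identity
\[
X_n^j-Y_{n,j}^j=\sum_{l=0}^{j-1}Y_{n,j}^{\,l}(X_n-Y_{n,j})X_n^{j-1-l},
\]
combined with $\|X_n-Y_{n,j}\|=\big\|\sum_{k\ge m-j}G_{n,k}\big\|=O\big((M^2\alpha_n\beta_n)^{m-j+1}\big)$ and $\|X_n\|,\|Y_{n,j}\|=O(M^2\alpha_n\beta_n)$, to conclude $\|X_n^j-Y_{n,j}^j\|=O\big((M^2\alpha_n\beta_n)^m\big)$. Summing over $j\in\{1,\dots,m-1\}$ preserves this order, and combining with the tail yields the required bound with $C_m$ depending on $m$, $N$, $M$, and the constants from Lemma~\ref{le:trunc}.

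The main technical obstacle will be the bookkeeping of the powers of $\|v_\pm\|_\infty$ together with the powers of $\alpha_n,\beta_n$, so that the final estimate displays exactly $m$ factors of each $\|v_\pm(\lambda)\|_\infty$ and of each modulus $\alpha_n,\beta_n$, as in the statement. Both the tail and the telescoping estimate naturally produce \emph{at least} $m$ factors of $\|v_-\|_\infty\|v_+\|_\infty\alpha_n\beta_n$; the excess factors are uniformly small for $n\ge n_0$ by the choice of $n_0$ and can be absorbed into $C_m$ via $KM^2\alpha_n\beta_n\le 1/2$. A minor subsidiary point is that one cannot invoke the logarithm series for $X_n$ as a bounded operator unless $\|X_n\|<1$ is ensured uniformly in $\lambda\in\Sigma$, which is precisely what the choice of $n_0$ above accomplishes via the continuity of $v_\pm$ and $u_\pm^{\pm 1}$ and the compactness of $\Sigma$.
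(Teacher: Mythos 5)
Your proposal is correct and follows essentially the same route as the paper: the same application of Lemma~\ref{le:trunc} with $j=0$ gives $\|G_{n,k}(b(\lambda),c(\lambda))\|\le\big(A_N^2\|v_-(\lambda)\|_\infty\|v_+(\lambda)\|_\infty\,\alpha_n(\lambda)\beta_n(\lambda)\big)^{k+1}$, and continuity of the data together with compactness of $\Sigma$ yields the uniform choice of $n_0$; the only difference is that the paper then delegates the expansion of $\tr\log(I-X_n)$ and its comparison with the partial sums to \cite[Proposition~3.3]{K-ZAA}, whereas you carry out that telescoping and geometric-series bookkeeping explicitly, which is a legitimate self-contained completion. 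One small adjustment: choose the smallness threshold at $n_0$ strictly below $1/2$ (say $\le 1/3$), since a ratio $\le 1/2$ only guarantees $\|X_n\|\le r/(1-r)\le 1$, while the logarithm series requires $\|X_n\|<1$ uniformly in $\lambda$.
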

\begin{proof}
From Lemma~\ref{le:trunc} it follows that
\begin{eqnarray*}
\big\|G_{n,k}\big(b(\lambda),c(\lambda)\big)\big\|
&\le&
\big(A_N^2\|v_-(\lambda)\|_\infty\|v_+(\lambda)\|_\infty\big)^{k+1}
\\
&&\times
\left[
\max_{1\le\alpha,\beta\le N}
\omega\left([u_-^{-1}(\lambda)]_{\alpha,\beta},\frac{1}{n+1}\right)
\right]^{k+1}
\\
&&\times
\left[
\max_{1\le\alpha,\beta\le N}
\omega\left([u_+^{-1}(\lambda)]_{\alpha,\beta},\frac{1}{n+1}\right)
\right]^{k+1}.
\end{eqnarray*}
for all $n,k\in\Z_+$ and all $\lambda\in\Sigma$. Moreover,
\[
\begin{split}
& A_N^2 \|v_-(\lambda)\|_\infty\|v_+(\lambda)\|_\infty
\\
&\quad\times
\left[
\max_{1\le\alpha,\beta\le N}
\omega\left([u_-^{-1}(\lambda)]_{\alpha,\beta},\frac{1}{n+1}\right)
\right]
\left[
\max_{1\le\alpha,\beta\le N}
\omega\left([u_+^{-1}(\lambda)]_{\alpha,\beta},\frac{1}{n+1}\right)
\right]
\\
&\le
A_N^2\max_{\lambda\in\Sigma}
\big(\|v_-(\lambda)\|_\infty\|v_+(\lambda)\|_\infty
\|u_-^{-1}(\lambda)\|_\omega
\|u_+^{-1}(\lambda)\|_\psi\big)
\omega\left(\frac{1}{n+1}\right)\psi\left(\frac{1}{n+1}\right).
\end{split}
\]
Since $\omega(1/n)\to 0$ and $\psi(1/n)\to 0$ as $n\to\infty$, there exists
a number $n_0\in\N$ such that the left-hand side of the latter inequality is
less than one for all $\lambda\in\Sigma$ and all $n\ge n_0$. Now the proof
can be developed by analogy with \cite[Proposition~3.3]{K-ZAA}.
\end{proof}
\subsection{Proof of Theorem~\ref{th:HO1}}
\begin{proof}[Proof of part {\rm (a)}]
Since $\omega$ and $\psi$ belong to the Bari-Stechkin class, there exist
$\alpha,\beta\in(0,1)$ such that $\omega(x)/x^\alpha$ and $\psi(x)/x^\beta$
are almost increasing (see \cite[Lemma~2]{BS56} or \cite[p.~54]{GM80}). Hence
there exists a constant $A>0$ such that $\omega(x)\le Ax^\gamma$ and
$\psi(x)\le Ax^\gamma$ for all $x\in(0,\pi]$, where $\gamma:=\min\{\alpha,\beta\}\in(0,1)$.
Therefore $u_-\in(\cH^\omega)_{N\times N}\subset C_{N\times N}^\gamma$ and
$u_+\in(\cH^\psi)_{N\times N}\subset C_{N\times N}^\gamma$, where $C^\gamma$
is the standard H\"older space generated by $h(x)=x^\gamma$. Since $T(\widetilde{a})$
is invertible on $H_N^2$ and $a=u_-u_+\in C_{N\times N}^\gamma$, by
Theorem~\ref{th:factorization-Hoelder}(b), the function $a$ admits a canonical
left Wiener-Hopf factorization $a=v_+v_-$ in $C_{N\times N}^\gamma$. In particular,
$v_-\in G(\overline{H^\infty})_{N\times N}$ and $v_+\in GH_{N\times N}^\infty$.
\end{proof}
\begin{proof}[Proof of parts {\rm (b)} and {\rm(c)}]
From Theorem~\ref{th:HO1}(a) it follows that hypotheses (i) and (ii)
of Theorem~\ref{th:HO-abstract} are satisfied. Suppose $m\in\N$ and
\eqref{eq:HO1-2} holds. In view of Lemma~\ref{le:product} and
\cite[Proposition~2.14]{BS06},
\[
I-T(a)T(a^{-1})=H(a)H(\widetilde{a}^{-1})\in\cC_m(H_N^2),
\quad
I-T(\widetilde{c})T(\widetilde{b})=H(\widetilde{c})H(b)\in\cC_m(H_N^2),
\]
and $H(b)H(\widetilde{c})\in\cC_m(H_N^2)$. Hence Theorems~\ref{th:HO1}(b)
and \ref{th:HO1}(c) follow from Theorems~\ref{th:HO-abstract}(a) and
\ref{th:HO-abstract}(b).
\end{proof}
\begin{proof}[Proof of part {\rm (d)}]
By Lemma~\ref{le:traceF}, $\tr F_{n,m-1}(b,c)$ as $n\to\infty$. Hence the statement
follows from the arguments of the proof of part (c) and Theorem~\ref{th:HO-abstract}(c).
\end{proof}
\begin{proof}[Proof of part {\rm (e)}]
Suppose $\Sigma$ consists of one point $\lambda$ only (and we will not
write the dependence on it). From Lema~\ref{le:decomposition} it follows that
there exist a positive constant $C_m$ and a number $n_0\in\N$ such that for
all $n\ge n_0$,
\begin{equation}\label{eq:HO1-E-1}
\tr\log\left\{I-\sum_{k=0}^\infty G_{n,k}(b,c)\right\}
=-\tr\left[\sum_{j=1}^{m-1}\frac{1}{j}\left(
\sum_{k=0}^{m-j-1}G_{n,k}(b,c)
\right)^j\right]
+s_n,
\end{equation}
where
\begin{equation}\label{eq:HO1-E-2}
|s_n|\le C_m\left[
\|v_-\|_\infty\|v_+\|_\infty\|u_-^{-1}\|_\omega\|u_+^{-1}\|_\psi\,
\omega\left(\frac{1}{n+1}\right)\psi\left(\frac{1}{n+1}\right)
\right]^m.
\end{equation}
From \eqref{eq:HO1-2} and \eqref{eq:HO1-E-2} we get $\sum_{n=n_0}^\infty |s_n|<\infty$.
Applying Lemma~\ref{le:BS} to the decomposition \eqref{eq:HO1-E-1}, we conclude
that  there exists a constant $E(a)\ne 0$ such that for all $n\ge n_0$,
\begin{equation}\label{eq:HO1-E-3}
\begin{split}
\log\det T_n(a)
=&
(n+1)\log G(a)+\log E(a)
\\
&+
\tr
\left[\sum_{\ell=1}^n\sum_{j=1}^{m-1}\frac{1}{j}\left(
\sum_{k=0}^{m-j-1}G_{\ell,k}(b,c)
\right)^j\right]
+\sum_{k=n+1}^\infty s_k.
\end{split}
\end{equation}
From \eqref{eq:HO1-E-2} we get
\begin{equation}\label{eq:HO1-E-4}
\sum_{k=n+1}^\infty s_k=O\left(\sum_{k=n+1}^\infty
\left[\omega\left(\frac{1}{k}\right)\psi\left(\frac{1}{k}\right)\right]^m\right)
\quad(n\to\infty).
\end{equation}
Combining \eqref{eq:HO1-E-3} and \eqref{eq:HO1-E-4}, we arrive at \eqref{eq:HO1-6}.
Part (e) is proved.
\end{proof}
\begin{proof}[Proof of part {\rm (f)}]
In view of \eqref{eq:HO1-E-3}, it is sufficient to show that
\begin{equation}\label{eq:HO1-F-1}
\sum_{k=n+1}^\infty s_k=o\left(\sum_{k=n+1}^\infty
\left[\omega\left(\frac{1}{k}\right)\psi\left(\frac{1}{k}\right)\right]^m\right)
\quad (n\to\infty).
\end{equation}
By Lemma~\ref{le:decomposition}, for all $k\ge n_0$,
\begin{equation}\label{eq:HO1-F-2}
\begin{split}
|s_k|\le &\
C_m\big(\|v_-\|_\infty\|v_+\|_\infty\big)^m
\\
&\times
\left[
\max_{1\le\alpha,\beta\le N}
\omega\left([u_-^{-1}]_{\alpha,\beta},\frac{1}{k+1}\right)
\right]^m
\\
&\times
\left[
\max_{1\le\alpha,\beta\le N}
\omega\left([u_+^{-1}]_{\alpha,\beta},\frac{1}{k+1}\right)
\right]^m.
\end{split}
\end{equation}
If $u_-\in G(\cH_0^\omega\cap \overline{H^\infty})_{N\times N}$, then
for every $\eps>0$ there exists a number $n_1(\eps)\ge n_0$ such that for all
$k\ge n_1(\eps)$,
\begin{equation}\label{eq:HO1-F-3}
\max_{1\le\alpha,\beta\le N}
\omega\left([u_-^{-1}]_{\alpha,\beta},\frac{1}{k+1}\right)
<\eps\omega\left(\frac{1}{k+1}\right).
\end{equation}
From \eqref{eq:HO1-F-2} and \eqref{eq:HO1-F-3} it follows that for all $n\ge n_1(\eps)$,
\[
\sum_{k=n+1}^\infty |s_k|\le\eps^mC_m
\big(
\|v_-\|_\infty
\|v_+\|_\infty
\|u_+^{-1}\|_\psi
\big)^m
\sum_{k=n+1}^\infty
\left[\omega\left(\frac{1}{k}\right)\psi\left(\frac{1}{k}\right)\right]^m,
\]
that is, \eqref{eq:HO1-F-1} holds.
If $u_+\in G(\cH_0^\psi\cap H^\infty)_{N\times N}$, then one can show
as above that \eqref{eq:HO1-F-1} is fulfilled.
\end{proof}
\subsection{Auxiliary lemma}
\begin{lemma}\label{le:uniform}
Let $\omega$ belong to the Bari-Stechkin class. If $\Sigma$ is a compact set
in the complex plane and $a:\Sigma\to(\cH_0^\omega)_{N\times N}$ is a continuous
function, then
\[
\lim_{n\to\infty}\Bigg\{
\left[\omega\left(\frac{1}{n}\right)\right]^{-1}
\sup_{\lambda\in\Sigma}\max_{1\le\alpha,\beta\le N}
\omega\left([a(\lambda)]_{\alpha,\beta},\frac{1}{n}\right)
\Bigg\}=0.
\]
\end{lemma}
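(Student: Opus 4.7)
The strategy is to reduce the uniform-in-$\lambda$ statement to the pointwise definition of $\cH_0^\omega$ by exploiting the total boundedness of $a(\Sigma)$. Since $\Sigma$ is compact and $a$ is continuous, $a(\Sigma)$ is a compact—hence totally bounded—subset of the Banach space $(\cH_0^\omega)_{N\times N}$ equipped with the norm $\|\cdot\|_{(\cH^\omega)_{N\times N}}$. I would fix $\eps>0$ and choose a finite $\eps$-net $\{a_1,\ldots,a_M\}\subset a(\Sigma)$.

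The first step is to treat the finite net. Each $a_j$ lies in $(\cH_0^\omega)_{N\times N}$, so every scalar entry $[a_j]_{\alpha,\beta}$ belongs to $\cH_0^\omega$ and therefore satisfies $\omega([a_j]_{\alpha,\beta},x)/\omega(x)\to 0$ as $x\to 0+$. Because the number of such entries is finite (at most $MN^2$), I can pick some $n_0=n_0(\eps)\in\N$ such that for all $n\ge n_0$, all $j\in\{1,\dots,M\}$ and all $\alpha,\beta$,
\[
\omega\bigl([a_j]_{\alpha,\beta},1/n\bigr)<\eps\,\omega(1/n).
\]

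The second step is to transfer this bound to arbitrary $\lambda\in\Sigma$. Given such a $\lambda$, I pick $j=j(\lambda)$ with $\|a(\lambda)-a_j\|_{(\cH^\omega)_{N\times N}}<\eps$; setting $g:=[a(\lambda)-a_j]_{\alpha,\beta}$ and using the paper's normalization $\|\cdot\|_{(\cH^\omega)_{N\times N}}=N\max_{\alpha,\beta}\|\cdot\|_{\cH^\omega}$, I obtain $|g|_\omega\le\|g\|_{\cH^\omega}\le\eps/N$, whence $\omega(g,x)\le(\eps/N)\omega(x)$. The subadditivity of the (scalar) modulus of continuity in its first argument then yields
\[
\omega\bigl([a(\lambda)]_{\alpha,\beta},1/n\bigr)\le\omega\bigl([a_j]_{\alpha,\beta},1/n\bigr)+\omega(g,1/n)\le\bigl(1+\tfrac{1}{N}\bigr)\eps\,\omega(1/n)
\]
for all $n\ge n_0$, uniformly in $\lambda$ and in $\alpha,\beta$. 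Taking the supremum over $\lambda,\alpha,\beta$, dividing by $\omega(1/n)$, and letting $\eps\to 0$ delivers the claimed limit.

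The only point requiring care—and the nearest thing to an obstacle—is the bookkeeping that passes from the matrix norm of $a(\lambda)-a_j$ to the scalar seminorm $|g|_\omega$ of a single entry; once one notes that the adopted matrix norm dominates each entry-wise $\cH^\omega$-norm, the argument reduces to the two elementary ingredients of compactness (a finite $\eps$-net) and subadditivity of the modulus of continuity, with no appeal to the Bari-Stechkin conditions beyond the defining property $\omega(x)\to 0$.
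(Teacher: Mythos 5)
Your proof is correct. The two ingredients you use — compactness of $a(\Sigma)$ in $(\cH_0^\omega)_{N\times N}$ and the perturbation estimate $\omega(f,x)\le\omega(g,x)+|f-g|_\omega\,\omega(x)$ coming from subadditivity of the modulus of continuity and the definition of the seminorm — are exactly the ones the paper relies on, but you organize them differently. The paper argues by contradiction: it assumes the $\limsup$ is positive, selects $\lambda_k\in\Sigma$ nearly attaining the supremum along a subsequence $n_k$, extracts a convergent subsequence $\lambda_{k_j}\to\lambda_0\in\Sigma$ by sequential compactness, and uses the same inequality (in the form $\omega(f,x)/\omega(x)\le\omega(g,x)/\omega(x)+|f-g|_\omega$) together with continuity of $a$ to contradict $[a(\lambda_0)]_{\alpha_0,\beta_0}\in\cH_0^\omega$ for a single entry. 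You instead exploit total boundedness directly: a finite $\eps$-net $\{a_1,\dots,a_M\}\subset a(\Sigma)$ reduces the statement to finitely many entries, for which the defining property of $\cH_0^\omega$ gives a common $n_0(\eps)$, and the seminorm bound $|g|_\omega<\eps/N$ transfers the estimate to every $\lambda\in\Sigma$, yielding the explicit uniform bound $(1+\tfrac{1}{N})\eps$ for all $n\ge n_0$. The direct version is arguably cleaner (no subsequence extraction, and it makes the uniformity quantitative), while the paper's contradiction argument localizes the failure at a single limit point $\lambda_0$; mathematically they are equivalent uses of compactness. Your closing remark is also accurate: beyond positivity of $\omega$ on $(0,\pi]$ (needed so the quotients make sense), no Bari-Stechkin condition enters this lemma.
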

\begin{proof}
Assume the contrary. Then there exist a constant $C>0$ and a sequence
$\{n_k\}_{k=1}^\infty$ such that
\[
\lim_{k\to\infty}\Bigg\{
\left[\omega\left(\frac{1}{n_k}\right)\right]^{-1}
\sup_{\lambda\in\Sigma}\max_{1\le\alpha,\beta\le N}
\omega\left([a(\lambda)]_{\alpha,\beta},\frac{1}{n_k}\right)
\Bigg\}=C.
\]
Hence there exist a number $k_0\in\N$ and a sequence $\{\lambda_k\}_{k=k_0}^\infty$
such that for all $k\ge k_0$,
\begin{equation}\label{eq:uniform-1}
\left[\omega\left(\frac{1}{n_k}\right)\right]^{-1}
\max_{1\le\alpha,\beta\le N}
\omega\left([a(\lambda_k)]_{\alpha,\beta},\frac{1}{n_k}\right)
\ge\frac{C}{2}>0.
\end{equation}
Since $\{\lambda_k\}_{k=k_0}^\infty$ is bounded, there is its convergent subsequence
$\{\lambda_{k_j}\}_{j=1}^\infty$. Let $\lambda_0$ be the limit of this subsequence.
Clearly, $\lambda_0\in\Sigma$ because $\Sigma$ is closed. Since the function
$a:\Sigma\to(\cH_0^\omega)_{N\times N}$ is continuous at $\lambda_0$, for every
$\eps\in(0,C/2)$, there exists a $\Delta>0$ such that $|\lambda-\lambda_0|<\Delta$,
$\lambda\in\Sigma$ implies $\|a(\lambda)-a(\lambda_0)\|_\omega<\eps$. Because
$\lambda_{k_j}\to\lambda_0$ as $j\to\infty$, for that $\Delta$ there exists a number
$J\in\N$ such that $|\lambda_{k_j}-\lambda_0|<\Delta$ for all $j\ge J$, and thus
\begin{equation}\label{eq:uniform-2}
\|a(\lambda_{k_j})-a(\lambda_0)\|_\omega<\eps
\quad\mbox{for all}\quad j\ge J.
\end{equation}
On the other hand, \eqref{eq:uniform-1} implies that
\begin{equation}\label{eq:uniform-3}
\left[\omega\left(\frac{1}{n_{k_j}}\right)\right]^{-1}
\max_{1\le\alpha,\beta\le N}
\omega\left([a(\lambda_{k_j})]_{\alpha,\beta},\frac{1}{n_{k_j}}\right)
\ge\frac{C}{2}>0
\quad\mbox{for all}\quad j\ge J.
\end{equation}
It is easy to see that if $f,g\in\cH^\omega$, then for all $x\in(0,\pi]$,
\[
\frac{\omega(f,x)}{\omega(x)}
\le
\frac{\omega(g,x)}{\omega(x)}
+
|f-g|_\omega.
\]
Hence, for all $j\ge J$,
\begin{equation}\label{eq:uniform-4}
\begin{split}
&
\left[\omega\left(\frac{1}{n_{k_j}}\right)\right]^{-1}
\max_{1\le\alpha,\beta\le N}
\omega\left([a(\lambda_{k_j})]_{\alpha,\beta},\frac{1}{n_{k_j}}\right)
\\
&
\le
\left[\omega\left(\frac{1}{n_{k_j}}\right)\right]^{-1}
\max_{1\le\alpha,\beta\le N}
\omega\left([a(\lambda_0)]_{\alpha,\beta},\frac{1}{n_{k_j}}\right)
+
\|a(\lambda_{k_j})-a(\lambda_0)\|_\omega.
\end{split}
\end{equation}
From \eqref{eq:uniform-2}--\eqref{eq:uniform-4} we get for all $j\ge J$,
\[
\left[\omega\left(\frac{1}{n_{k_j}}\right)\right]^{-1}
\max_{1\le\alpha,\beta\le N}
\omega\left([a(\lambda_0)]_{\alpha,\beta},\frac{1}{n_{k_j}}\right)
\ge\frac{C}{2}-\eps>0.
\]
It follows that there exist a pair $\alpha_0,\beta_0\in\{1,\dots,N\}$
and a subsequence $\{m_s\}_{s\in\N}$ of $\{n_{k_j}\}_{j=J}^\infty$ such that
for all $s\in\N$,
\[
\left[\omega\left(\frac{1}{m_s}\right)\right]^{-1}
\omega\left([a(\lambda_0)]_{\alpha_0,\beta_0},\frac{1}{m_s}\right)
\ge\frac{C}{2}-\eps>0.
\]
This contradicts the fact that $[a(\lambda_0)]_{\alpha_0,\beta_0}\in\cH_0^\omega$.
\end{proof}
\subsection{Proof of Theorem~\ref{th:HO2}}
\begin{proof}[Proof of part {\rm (a)}]
Similarly to the proof of Lemma~\ref{le:product} one can show that if
$a$ belongs to $(\cH^\omega)_{N\times N}$, then
\begin{equation}\label{eq:HO2-proof-1}
s_n\big(H(a)\big)=O\big(\omega(1/n)\big),
\quad
s_n\big(H(\widetilde{a})\big)=O\big(\omega(1/n)\big)
\quad
(n\in\N).
\end{equation}
Combining \eqref{eq:HO2-1} and \eqref{eq:HO2-proof-1}, we get
$H(a),H(\widetilde{a})\in\cC_2(H_N^2)$. It is well known that
\[
(K_{2,2}^{1/2,1/2})_{N\times N}=\big\{
a\in L_{N\times N}^\infty\ :\
H(a),H(\widetilde{a})\in\cC_2(H_N^2)
\big\}
\]
(see \cite[Section~5.1]{BS99}, \cite[Sections~10.8--10.11]{BS06}),
which finishes the proof.
\end{proof}
\begin{proof}[Proof of part {\rm (b)}]
By Theorem~\ref{th:factorization-Hoelder}, the function $a$ admits canonical
right and left Wiener-Hopf factorizations in $\cH_{N\times N}$. From
Theorem~\ref{th:HO1}(b) we get
\begin{equation}\label{eq:HO2-B-1}
\log\det T_n(a)=(n+1)\log G(a)+\log\mathrm{det}_1 T(a)T(a^{-1})+o(1)
\quad
(n\to\infty).
\end{equation}
On the other hand, from Theorem~\ref{th:HO1}(e), (f) it follows that
there exists a nonzero constant $E(a)$ such that
\begin{equation}\label{eq:HO2-B-2}
\log\det T_n(a)=(n+1)\log G(a)+\log E(a)+\delta(n,\cH)
\quad
(n\to\infty).
\end{equation}
From \eqref{eq:HO2-B-1} and \eqref{eq:HO2-B-2} we deduce that
$E(a)=\det_1 T(a)T(a^{-1})$, that is, we arrive at \eqref{eq:Widom1} with
$o(1)$ replaced by $\delta(n,\cH)$.
\end{proof}
\begin{proof}[Proof of part {\rm (c)}]
This statement is proved by analogy with \cite[Theorem~1.5]{K-ZAA} and
\cite[Theorem~1.4]{K-IWOTA05}, although the idea of this proof goes back to
\cite[Theorem~6.2]{Widom76}.
Let $\Omega$ be any bounded open set containing
the set $\mathrm{sp}\,T(a)\cup\mathrm{sp}\,T(\widetilde{a})$ on the closure of
which $f$ is analytic and let $\Sigma$ be a closed neighborhood of its boundary
$\partial\Omega$ such that
$\Sigma\cap(\mathrm{sp}\,T(a)\cup\mathrm{sp}\,T(\widetilde{a}))=\emptyset$.
Let $\lambda\in\Sigma$. Then $T(a)-\lambda I=T[a-\lambda]$ and
$T(\widetilde{a})-\lambda I=T[(a-\lambda)\widetilde{\hspace{2mm}}]$ are
invertible on $H_N^2$. By Theorem~\ref{th:factorization-Hoelder}, $a-\lambda$ admits
canonical right and left Wiener-Hopf factorizations
$a-\lambda=u_-(\lambda)u_+(\lambda)=v_+(\lambda)v_-(\lambda)$ in $\cH_{N\times N}$.
Since $a-\lambda:\Sigma\to\cH_{N\times N}$ is a continuous function with respect
to $\lambda$, in view of Theorem~\ref{th:stability}, these factorizations can be
chosen so that the functions
\[
u_-^{\pm 1},v_-^{\pm 1}:\Sigma\to(\cH\cap\overline{H^\infty})_{N\times N},
\quad
u_+^{\pm 1},v_+^{\pm 1}:\Sigma\to(\cH\cap H^\infty)_{N\times N}
\]
are continuous. From Lemma~\ref{le:decomposition} with $m=1$ it follows that there
exist a constant $C_1\in(0,\infty)$ and a number $n_0\in\N$ such that for all
$\lambda\in\Sigma$ and all $n\ge n_0$,
\begin{equation}\label{eq:HO2-C-1}
\begin{split}
|s_n(\lambda)|
\le &
C_1 \|v_-(\lambda)\|_\infty \|v_+(\lambda)\|_\infty
\left[\max_{1\le\alpha,\beta\le N}
\omega\left([u_-^{-1}(\lambda)]_{\alpha,\beta},\frac{1}{n+1}\right)\right]
\\
&\times
\left[\max_{1\le\alpha,\beta\le N}
\omega\left([u_+^{-1}(\lambda)]_{\alpha,\beta},\frac{1}{n+1}\right)\right],
\end{split}
\end{equation}
where
\[
s_n(\lambda)=
\tr\log\left\{I-\sum_{k=0}^\infty G_{n,k}\big(b(\lambda),c(\lambda)\big)\right\}.
\]
If $a\in(\cH^\omega)_{N\times N}$, then from \eqref{eq:HO2-C-1} we get
for all $\lambda\in\Sigma$ and all $n\ge n_0$,
\begin{eqnarray}\label{eq:HO2-C-2}
&&
|s_n(\lambda)|\le C_1 \max_{\lambda\in\Sigma}\big(
\|v_-(\lambda)\|_\omega
\|v_+(\lambda)\|_\omega
\|u_-^{-1}(\lambda)\|_\omega
\|u_+^{-1}(\lambda)\|_\omega
\big)
\left[\omega\left(\frac{1}{n+1}\right)\right]^2.
\end{eqnarray}
If $a\in(\cH_0^\omega)_{N\times N}$, then from Lemma~\ref{le:uniform} it follows
that for every $\eps>0$ there exists a number $n_1(\eps)\ge n_0$ such that for all
$\lambda\in\Sigma$ and all $n\ge n_1(\eps)$,
\begin{equation}\label{eq:HO2-C-3}
\max_{1\le\alpha,\beta\le N}
\omega\left([u_\pm^{-1}(\lambda)]_{\alpha,\beta},\frac{1}{n+1}\right)
<\eps\omega\left(\frac{1}{n+1}\right).
\end{equation}
Combining \eqref{eq:HO2-C-1} and \eqref{eq:HO2-C-3}, we obtain for all
$\lambda\in\Sigma$ and all $n\ge n_1(\eps)$,
\begin{equation}\label{eq:HO2-C-4}
|s_n(\lambda)|\le \eps^2 C_1 \max_{\lambda\in\Sigma}\big(
\|v_-(\lambda)\|_\omega
\|v_+(\lambda)\|_\omega
\big)
\left[\omega\left(\frac{1}{n+1}\right)\right]^2.
\end{equation}
From \eqref{eq:HO2-C-2} and \eqref{eq:HO2-C-4} we get for all $\lambda\in\Sigma$,
\begin{eqnarray}
\sum_{k=n+1}^\infty|s_k(\lambda)|
\le
\mathrm{const}\sum_{k=n+1}^\infty\left[\omega\left(\frac{1}{k}\right)\right]^2
&\mbox{if}&
a\in(\cH^\omega)_{N\times N}, \ n\ge n_0,
\label{eq:HO2-C-5}
\\
\sum_{k=n+1}^\infty|s_k(\lambda)|
\le
\eps^2\,
\mathrm{const}\sum_{k=n+1}^\infty\left[\omega\left(\frac{1}{k}\right)\right]^2
&\mbox{if}&
a\in(\cH_0^\omega)_{N\times N}, \ n\ge n_1(\eps).
\label{eq:HO2-C-6}
\end{eqnarray}
From Lemma~\ref{le:BS} and Theorem~\ref{th:HO1}(b) it follows that for all
$\lambda\in\Sigma$ and $n\ge n_0$,
\[
\log\det T_n(a-\lambda)=
(n+1)\log G(a-\lambda)+
\log\mathrm{det}_1 T[a-\lambda]T[(a-\lambda)^{-1}]+
\sum_{k=n+1}^\infty s_k(\lambda).
\]
Multiplying this equality by $-f'(\lambda)$ and then integrating over
$\partial{\Omega}$ by parts, we get
\begin{equation}\label{eq:HO2-C-7}
\begin{split}
\int_{\partial\Omega} f(\lambda)\frac{d}{d\lambda}\log\det T_n(a-\lambda)d\lambda
&=
(n+1)\int_{\partial\Omega}f(\lambda)\frac{d}{d\lambda}\log G(a-\lambda)d\lambda
\\
&
+
2\pi i E_f(a)
-
\int_{\partial\Omega}f'(\lambda)\left(\sum_{k=n+1}^\infty s_k(\lambda)\right)d\lambda.
\end{split}
\end{equation}
It was obtained in the proof of \cite[Theorem~6.2]{Widom76} (see also
\cite[Theorem~5.6]{BS99} and \cite[Section~10.90]{BS06}) that
\begin{eqnarray}
&&
\frac{1}{2\pi i}\int_{\partial\Omega}
f(\lambda)\frac{d}{d\lambda}\log\det T_n(a-\lambda)d\lambda
=\tr f(T_n(a)),
\label{eq:HO2-C-8}
\\
&&
\frac{1}{2\pi i}\int_{\partial\Omega}f(\lambda)\frac{d}{d\lambda}\log G(a-\lambda)d\lambda
=G_f(a).
\label{eq:HO2-C-9}
\end{eqnarray}
From \eqref{eq:HO2-C-5} and \eqref{eq:HO2-C-6} it follows that
\begin{equation}\label{eq:HO2-C-10}
-\int_{\partial\Omega}f'(\lambda)\left(\sum_{k=n+1}^\infty s_k(\lambda)\right)d\lambda
=\delta(n,\cH)\quad(n\to\infty).
\end{equation}
Combining \eqref{eq:HO2-C-7}--\eqref{eq:HO2-C-10}, we arrive at \eqref{eq:Widom2}
with $o(1)$ replaced by $\delta(n,\cH)$.
\end{proof}

\end{document}